\newcommand{\der}{\mathrm{Der}}
\newcommand{\dder}{\mathbb{D}\mathrm{er}}
\newcommand{\jac}{\mathfrak{Jac}}
\newcommand{\kf}{\mathbbm{k}}
\renewcommand{\hom}{\mathrm{Hom}}
\renewcommand{\phi}{\varphi}
\newcommand{\dbo}{\{\hspace{-3pt}\{}
\newcommand{\dbc}{\}\hspace{-3pt}\}}
\newcommand{\nat}{\mathbb{N}}
\newcommand{\zed}{\mathbb{Z}}
\newcommand{\mder}{\mathbb{M}\mathrm{Der}}
\newcommand{\sgn}{\mathrm{sgn}}
\newcommand{\sym}{\mathfrak{S}}
\title[]{On double Poisson structures on commutative algebras}
\author[Geoffrey Powell]{Geoffrey Powell}
\address{LAREMA, UMR 6093 du CNRS et de l'Université d'Angers, Université 
Bretagne Loire, France}
\email{Geoffrey.Powell@math.cnrs.fr}
\keywords{Double Poisson algebra -- polynomial algebra -- commutative algebra -- 
multi-derivation}
\subjclass[2000]{17B63}
\date{}
\begin{document}

\newtheorem{thm}{Theorem}[section]
\newtheorem{prop}[thm]{Proposition}
\newtheorem{cor}[thm]{Corollary}
\newtheorem{lem}[thm]{Lemma}

\newtheorem{THM}{Theorem}
\newtheorem{COR}[THM]{Corollary}
\newtheorem{PROP}[THM]{Proposition}

\theoremstyle{definition}
\newtheorem{defn}[thm]{Definition}
\newtheorem{exam}[thm]{Example}
\newtheorem{hyp}[thm]{Hypothesis}

\theoremstyle{remark}
\newtheorem{rem}[thm]{Remark}
\newtheorem{nota}[thm]{Notation}

\begin{abstract}
Double Poisson structures (à la Van den Bergh) on commutative algebras are 
considered. The main result shows that there are no non-trivial such structures on 
polynomial algebras of Krull dimension greater than one. 
For an arbitrary commutative algebra $A$, this places significant restrictions on 
possible double Poisson structures. Exotic double Poisson structures are 
exhibited by the case of the polynomial algebra on a single generator,
 previously considered by Van den Bergh.
\end{abstract}

\maketitle

\section{Introduction}

The notion of a double Poisson structure on an associative $R$-algebra (for $R$ 
a commutative unital ring) was 
introduced by Van den Bergh as a form of non-commutative Poisson structure 
\cite{vdB}; 
the structure is defined by a double bracket, which is an $R$-linear map 
$A^{\otimes 2} \rightarrow A^{\otimes 2}$ satisfying  antisymmetry and 
non-commutative derivation conditions. For a double Poisson structure, the 
double bracket also satisfies the double Jacobi relation (see Sections 
\ref{sect:multider} and 
\ref{sect:recollect}).  

The naïve relationship with (commutative) Poisson structures is as follows: 
composing with the multiplication map of $A$ gives a bracket $A \otimes A 
\rightarrow A$ which induces a Poisson structure on the abelianization of $A$. 
More generally, a double Poisson structure induces a Poisson structure on the 
associated representation schemes \cite{vdB}. There is a related, but 
weaker, version of non-commutative Poisson structure, due to Crawley-Boevey 
\cite{CB}; this is sufficient to induce a Poisson structure on the 
 representation schemes.

The notion of double Poisson structure is very rigid; 
nevertheless, interesting  examples are known, for example those related to 
non-commutative symplectic structures. Moreover, a classification of certain 
double Poisson structures on free associative 
algebras (tensor algebras) has been given in small rank \cite{ors,sok}; 
however,  a double Poisson structure 
on a non-commutative algebra does not in general induce a double Poisson on its 
abelianization. 

It is natural to consider what happens when the algebra $A$ is already 
commutative. For example, Van den Bergh stated a classification of 
(homogeneous) double 
Poisson structures on the polynomial algebra $\kf [t]$ over a field: up to 
scalar, there are only two 
 non-trivial (homogeneous) structures. A proof of the corresponding result 
(over 
a commutative ring $R$ on which squaring is injective) is given here as 
Proposition \ref{prop:poly_dim1}, also giving the (essentially unique) 
non-homogeneous example.
This provides important {\em exotic} examples.

For higher Krull dimension the situation is more dramatic; the following is 
Theorem \ref{thm:no_dP_rank>2} below:

\begin{THM}
 Let $R$ be a commutative ring on which the squaring map $x \mapsto x^2$ is 
injective, then there is no non-trivial double Poisson structure on  $A:= 
R[t_1, \ldots, t_d]$ for  $d \geq 2$. 
\end{THM}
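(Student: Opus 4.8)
The plan is to reduce the classification of double Poisson structures on $A := R[t_1,\dots,t_d]$ to a single polynomial equation over $R$, and then to solve it. \emph{First}, I would put double brackets into a normal form. A double bracket on $A$ is determined by the elements $P_{ij} := \dbo t_i,t_j\dbc \in A\otimes_R A$; beyond the antisymmetry $P_{ji} = -P_{ij}^\circ$, the requirement that $\dbo t_i,-\dbc$ be a well-defined derivation for the outer bimodule structure — so that $\dbo t_i, t_j t_k\dbc = \dbo t_i, t_k t_j\dbc$ — imposes the relations
\[
(t_j\otimes 1 - 1\otimes t_j)\,P_{ik} \;=\; (t_k\otimes 1 - 1\otimes t_k)\,P_{ij} \qquad (1\le i,j,k\le d).
\]
This is where the hypothesis $d\ge 2$ enters first: in the polynomial ring $A\otimes_R A$ the element $t_j\otimes 1-1\otimes t_j$ is a monic linear polynomial in the variable $t_j\otimes 1$, hence a non-zero-divisor, and for $j\neq k$ these are non-associate, so a short divisibility argument — valid over an arbitrary $R$ — extracts from these relations together with antisymmetry a single element $R\in A\otimes_R A$ with $R^\circ = -R$ such that $P_{ij} = (t_i\otimes 1 - 1\otimes t_i)(t_j\otimes 1-1\otimes t_j)\,R$ for all $i,j$. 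Conversely, a direct check of the Leibniz rules shows that for any $R$ with $R^\circ=-R$ the formula $\dbo f,g\dbc := R\cdot(f\otimes 1 - 1\otimes f)(g\otimes 1 - 1\otimes g)$ defines a double bracket; so, for $d\ge 2$, the double brackets on $A$ are exactly parametrised by the antisymmetric elements of $A\otimes_R A$. (This normal form fails when $d=1$, which is precisely why Proposition~\ref{prop:poly_dim1} produces exotic structures.)

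\emph{Second}, I would compute the double Jacobi identity in this normal form. Writing $A^{\otimes 3}=R[t^{(1)},t^{(2)},t^{(3)}]$, one evaluates $\dbo t_i,\dbo t_j,t_k\dbc\dbc_L$ explicitly and assembles the cyclic sum defining the triple bracket $\dbo t_i,t_j,t_k\dbc$, whose vanishing on generators is the double Jacobi identity. After expanding and regrouping, the three summands collapse to $(t^{(1)}_i-t^{(2)}_i)(t^{(2)}_j-t^{(3)}_j)(t^{(1)}_k-t^{(3)}_k)$ times a factor independent of $i,j,k$; since that product is a non-zero-divisor in $A^{\otimes 3}$, the double Jacobi identity becomes equivalent to the single associative Yang--Baxter-type equation
\[
R_{12}R_{13} - R_{12}R_{23} + R_{13}R_{23} \;=\; 0 \qquad\text{in } A^{\otimes 3},
\]
where $R_{ab}$ denotes $R$ placed in the tensor factors $a$ and $b$. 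I expect this calculation — and in particular matching it against the sign and permutation conventions recalled in Section~\ref{sect:recollect} — to be the main obstacle; the rest is short.

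\emph{Third}, I would solve the equation. Suppose $R\neq 0$, let $m$ be the degree of $R$ in the first-slot variables $t^{(1)}$, and let $R^{[m]}$ be its top homogeneous part in those variables. In the displayed equation $R_{12}R_{23}$ and $R_{13}R_{23}$ have $t^{(1)}$-degree at most $m$, while $R_{12}R_{13}$ has $t^{(1)}$-degree $2m$ with top part $R^{[m]}(t^{(1)},t^{(2)})\,R^{[m]}(t^{(1)},t^{(3)})$; so if $m\ge 1$ this product vanishes. Writing $R^{[m]}(t^{(1)},t^{(2)})=\sum_\alpha c_\alpha(t^{(1)})\,\mu_\alpha(t^{(2)})$ with the $\mu_\alpha$ distinct monomials, and comparing coefficients of $\mu_\alpha(t^{(2)})\,\mu_\beta(t^{(3)})$, forces $c_\alpha c_\beta = 0$ for all $\alpha,\beta$, hence $c_\alpha^2=0$; since squaring is injective on $R$, hence on $R[t^{(1)}]$, every $c_\alpha=0$ and $R^{[m]}=0$ — a contradiction. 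Thus $R$ involves no first-slot variables, say $R=1\otimes g$, and $R^\circ=-R$ then forces $g$ to be a scalar; the equation becomes $R^2=0$, and injectivity of squaring gives $R=0$. Hence the only double Poisson structure on $R[t_1,\dots,t_d]$ with $d\ge 2$ is the trivial one.
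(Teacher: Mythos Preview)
Your proposal is correct and follows essentially the same strategy as the paper: parametrise double brackets on $A$ by antisymmetric elements of $A^{\otimes 2}$ (this is Proposition~\ref{prop:double_triple}, and your divisibility argument is precisely the content of Lemma~\ref{lem:dder_poly} and Theorem~\ref{thm:multider_poly} for $n=2$), translate the double Jacobi relation into a single quadratic equation on that element (Proposition~\ref{prop:jac_poly}; your AYBE form $R_{12}R_{13}-R_{12}R_{23}+R_{13}R_{23}=0$ is equivalent to the paper's $\sum_{\sigma\in\zed/3}\sigma\cdot(\Psi_{13}\Psi_{23})=0$ after using $\tau\Psi=-\Psi$), and then show the only solution is zero via a leading-term argument together with injectivity of squaring. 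Your endgame is a minor variant of the paper's: you filter by total degree in the first tensor slot to isolate $R^{[m]}_{12}R^{[m]}_{13}$, whereas the paper uses the left lexicographic order on $A^{\otimes 3}$ to isolate contributions of the form $(\alpha_m)^2\otimes m\otimes m$; both extract a square and invoke the hypothesis on $R$. One purely cosmetic remark: you use the symbol $R$ for both the base ring and the antisymmetric tensor---rename the latter (the paper calls it $\Psi$).
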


The result is a simple consequence of a general structure result on  
multi-derivations (see Theorem \ref{thm:multider_poly}); these 
multi-derivations 
(defined in Section \ref{sect:multider}) correspond to the $n$-brackets of Van 
den Bergh, except that the `anti-equivariance' condition with respect to the 
action of the cyclic group $\zed/n$ is not imposed.

This highlights the fact that, on commutative algebras, the axioms of a double 
Poisson structure are highly restrictive and provides further evidence that the 
notion of double Poisson structure should be relaxed, considering weaker 
structures such as Crawley-Boevey's non-commutative Poisson structures. 

Other  authors have observed that it is useful to relax the axioms of double 
Poisson algebras (see \cite{arth}, for example); it is however desirable (from 
the computational viewpoint) to retain the multi-derivation property, so the 
general structure result,Theorem  \ref{thm:multider_poly},  applies in this 
setting. Corollary \ref{cor:induced_bracket_trivial} shows that, in the 
polynomial case (of Krull dimension greater than one), 
 this relaxation is not sufficient to be able to construct non-trivial 
non-commutative 
Poisson structures (in the sense of \cite{CB}).

Section \ref{sect:comm} considers the general case of double Poisson structures 
on a commutative algebra. These are either {\em standard}, arising from double 
brackets on polynomial 
algebras, or are {\em exotic}. The results for polynomial algebras give a 
reasonable understanding of 
the standard double Poisson structures; the exotic case is illustrated by the 
results for $\kf [t]$, as indicated above. Further consequences will be 
considered elsewhere. 

Various other lines of investigation are possible. For instance, the work of 
Berest, Ramadoss et al. \cite{BKR,BCER} suggests that double Poisson structures 
for  algebras should be studied in the derived setting.

\bigskip
\noindent 
Funding: The  author was partially supported by the project {\em Nouvelle 
Équipe},  convention No. 2013-10203/10204 between the Région des Pays de la 
Loire and the Université d'Angers.

\section{Multi-derivations}
\label{sect:multider}

Fix a commutative unital ring $R$ and a unital,  associative $R$-algebra $A$; 
all tensor 
products are taken over $R$. 
For $2 \leq n \in \nat$, the symmetric group $\sym_n$ acts by place 
permutations 
on the tensor product $A^{\otimes n}$
 ($\sigma (a_1 \otimes \ldots \otimes a_n) = a_{\sigma^{-1} (1)} 
\otimes \ldots \otimes a_{\sigma^{-1}(n)}$) and, hence,  
 by conjugation on
  $
  \hom_R (A^{\otimes n} , A^{\otimes n}) 
 $ 
via $\phi \mapsto \sigma \cdot \phi := \sigma \circ \phi\circ  \sigma^{-1}$, so 
that a linear map $\phi$ is $\sym_n$-equivariant if and only if it is fixed 
under this action.
 The group $\zed/n$ is considered as a subgroup of $\sym_n$, hence the above 
action restricts to $\zed/n$.

The $R$-module of double derivations $\dder (A)$ is by definition the submodule 
\[
\der (A, A^{\otimes 2}) \subset \hom_R (A, A^{\otimes 2}) 
\]
of derivations, where $A^{\otimes 2}$ is equipped with the outer bimodule 
structure; 
 explicitly $\psi \in \hom_R (A, A^{\otimes 2})$ belongs to $\dder (A)$ if and 
only if, for all $a, b \in A$, $\psi (ab) = (a \otimes 1) \psi (b) + \psi (a)(1 
\otimes b)$, using the product in $A^{\otimes 2}$. (See \cite{Ginzburg_NC}, for 
example.)

\begin{exam}
 The double derivation $d_A \in \dder (A)$ is the $R$-linear map $a \mapsto a 
\otimes 1 - 1 \otimes a$.  This induces the universal derivation, $A 
\rightarrow 
\Omega^{nc}_A$, where the bimodule $\Omega^{nc}_A$ of non-commutative 
differentials
 is identified as the kernel of the multiplication $A \otimes A 
\stackrel{\mu}{\rightarrow} A$. 
 \end{exam}
 
\begin{lem}
\label{lem:dder_prod_codomain}
 Let $A$ be a commutative $R$-algebra. Multiplication at the codomain 
$A^{\otimes 2}$ induces a morphism of $R$-modules:
 \[
  \dder (A) \otimes A^{\otimes 2} 
  \rightarrow 
  \dder (A).
 \]
In particular, the double derivation $d_A$ gives rise to the morphism of 
$R$-modules:
\[
 \Pi : A^{\otimes 2} 
  \rightarrow 
  \dder (A)
\]
sending $\Theta \in A^{\otimes 2}$ to $a \mapsto (a \otimes 1 - 1 \otimes a) 
\Theta$.
\end{lem}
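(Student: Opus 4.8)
The plan is to write down the map explicitly and check the double derivation identity, the only real input being the commutativity of $A$. Recall that, since $A$ is commutative, the algebra $A^{\otimes 2}$ with componentwise multiplication $(x \otimes y)(x' \otimes y') = xx' \otimes yy'$ is itself commutative, and the outer $A$-bimodule structure on $A^{\otimes 2}$ is precisely left multiplication by $a \otimes 1$ and right multiplication by $1 \otimes b$ inside this algebra. So for $\psi \in \dder (A)$ the defining identity reads $\psi(ab) = (a \otimes 1)\psi(b) + \psi(a)(1 \otimes b)$ with all products taken in $A^{\otimes 2}$.

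First I would define, for $\psi \in \dder(A)$ and $\Theta \in A^{\otimes 2}$, the $R$-linear map $\psi \cdot \Theta : A \to A^{\otimes 2}$ by $(\psi \cdot \Theta)(a) := \psi(a)\,\Theta$. To see that $\psi \cdot \Theta$ again lies in $\dder(A)$, I would compute, for $a, b \in A$,
\[
(\psi \cdot \Theta)(ab) = \psi(ab)\,\Theta = \bigl((a \otimes 1)\psi(b) + \psi(a)(1 \otimes b)\bigr)\Theta = (a \otimes 1)\bigl(\psi(b)\Theta\bigr) + \psi(a)\Theta(1 \otimes b),
\]
where the last equality uses commutativity of $A^{\otimes 2}$ to move $\Theta$ past $1 \otimes b$; this is exactly $(a \otimes 1)(\psi \cdot \Theta)(b) + (\psi \cdot \Theta)(a)(1 \otimes b)$, the double derivation identity. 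Next I would check that $(\psi, \Theta) \mapsto \psi \cdot \Theta$ is $R$-bilinear: $R$-linearity in each variable is immediate, and $((r\psi) \cdot \Theta)(a) = r\psi(a)\Theta = \psi(a)(r\Theta) = (\psi \cdot (r\Theta))(a)$ for $r \in R$ because the product of $A^{\otimes 2}$ is $R$-bilinear. Hence the assignment factors through $\dder(A) \otimes A^{\otimes 2}$, yielding the asserted morphism of $R$-modules. Specialising $\psi = d_A$ then gives the $R$-linear map $\Pi := d_A \cdot (-) : A^{\otimes 2} \to \dder(A)$, $\Theta \mapsto \bigl(a \mapsto (a \otimes 1 - 1 \otimes a)\Theta\bigr)$, as claimed.

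I expect the only point requiring attention to be the appeal to commutativity in the displayed computation: for a noncommutative $A$ the algebra $A^{\otimes 2}$ is noncommutative, the step $\psi(a)(1 \otimes b)\Theta = \psi(a)\Theta(1 \otimes b)$ breaks down, and the construction fails — so this is genuinely a feature of the commutative setting, which is the standing hypothesis of the lemma. Everything else is routine bookkeeping with the two evident algebra structures on $A^{\otimes 2}$.
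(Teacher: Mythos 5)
Your proof is correct and is exactly the routine verification the paper leaves as ``Straightforward'': commutativity of $A^{\otimes 2}$ lets $\Theta$ slide past $1\otimes b$, so right multiplication by $\Theta$ preserves the outer-bimodule derivation identity. One small caveat on your closing aside: the paper's remark after the lemma points out that the result does survive for noncommutative $A$ if one instead uses the \emph{inner} bimodule action of $A^{\otimes 2}$ (which agrees with componentwise multiplication, up to a flip, in the commutative case), so it is only your literal formula $\psi(a)\Theta$ that breaks down, not the statement itself.
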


\begin{proof}
 Straightforward.
\end{proof}

\begin{rem}
 This result does not require that $A$ is a commutative and corresponds to the 
usual $A$-bimodule structure on $\dder (A)$ provided by the inner bimodule 
structure of $A^{\otimes 2}$. This formulation is given for ease of comparison with Lemma \ref{lem:mder_product} (where 
commutativity is required).
\end{rem}

By analogy with the case of double derivations,  $ \phi \in   \hom_R 
(A^{\otimes n} , A^{\otimes n}) $ is said to be a derivation with respect to 
the 
last variable if,  $\forall a, b \in A$ and $\forall \alpha \in A^{\otimes 
n-1}$:
\[
 \phi (\alpha \otimes ab) = (a\otimes 1^{\otimes n-1}) \phi (\alpha \otimes b) 
+ 
\phi (\alpha \otimes a) (1^{\otimes n-1} \otimes b), 
\]
using the product of $A^{\otimes n}$. This allows the following definition of 
multi-derivations, where the $\zed/n$-action is used to define the relevant 
bimodule structures.

\begin{defn}
For $2\leq n \in \nat$, the $R$-module of multi-derivations 
\[
\mder (A^{\otimes n}, A^{\otimes n}) \subset \hom_R (A^{\otimes n}, A^{\otimes 
n})
\]
is the submodule of morphisms $\phi$ such that $\sigma \cdot \phi $ is a 
derivation with respect to the last variable, for every $\sigma \in \zed/n$.

Let 
\[
 \mder (A^{\otimes n}, A^{\otimes n}) ^{\sgn} \subset \mder (A^{\otimes n}, 
A^{\otimes n})
\]
denote the sub $R$-module of multi-derivations $\phi$ such that  $\sigma \cdot 
\phi = (-1)^{\sgn(\sigma)} \phi$, 
 $\forall \sigma \in \zed/ n$.
\end{defn}

The following is clear from the definition: 

\begin{lem}
\label{lem:Z/n_stability}
 The sub $R$-modules
\[
\mder (A^{\otimes n}, A^{\otimes n}) ^{\sgn} \subset
\mder (A^{\otimes n}, A^{\otimes n}) \subset \hom_R (A^{\otimes n}, A^{\otimes 
n})
\]
are stable under the action of $\zed/n$.
\end{lem}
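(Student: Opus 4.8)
The plan is to exploit only one structural fact: the $\zed/n$-action on $\hom_R(A^{\otimes n}, A^{\otimes n})$ given by $\sigma\cdot\phi = \sigma\circ\phi\circ\sigma^{-1}$ is a genuine left group action (one checks $\tau\cdot(\sigma\cdot\phi) = (\tau\sigma)\cdot\phi$ directly from the formula), and that $\mder(A^{\otimes n}, A^{\otimes n})$ is by definition carved out by a family of conditions indexed by the elements of the group $\zed/n$ itself. Stability is then nothing more than the observation that left translation permutes this indexing set.

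Concretely, for $\mder(A^{\otimes n}, A^{\otimes n})$ I would fix $\phi$ in this submodule and $\tau\in\zed/n$, and verify that $\tau\cdot\phi$ again lies in it, i.e.\ that $\sigma\cdot(\tau\cdot\phi)$ is a derivation with respect to the last variable for every $\sigma\in\zed/n$. By associativity of the action, $\sigma\cdot(\tau\cdot\phi) = (\sigma\tau)\cdot\phi$, and $\sigma\tau\in\zed/n$ since $\zed/n$ is a subgroup of $\sym_n$; the desired derivation property is therefore exactly the defining condition for $\phi\in\mder(A^{\otimes n}, A^{\otimes n})$ applied to the element $\sigma\tau$. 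As $\sigma$ ranges over all of $\zed/n$, this gives $\tau\cdot\phi\in\mder(A^{\otimes n}, A^{\otimes n})$.

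For $\mder(A^{\otimes n}, A^{\otimes n})^{\sgn}$ the same associativity, combined with multiplicativity of $\sgn$ on $\zed/n$, gives for each $\sigma\in\zed/n$ that $\sigma\cdot(\tau\cdot\phi) = (\sigma\tau)\cdot\phi = (-1)^{\sgn(\sigma\tau)}\phi = (-1)^{\sgn(\sigma)}\bigl((-1)^{\sgn(\tau)}\phi\bigr) = (-1)^{\sgn(\sigma)}(\tau\cdot\phi)$, so $\tau\cdot\phi$ again satisfies the sign condition; it lies in $\mder(A^{\otimes n}, A^{\otimes n})$ by the previous paragraph, so lies in $\mder(A^{\otimes n}, A^{\otimes n})^{\sgn}$. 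I expect no genuine obstacle here: the statement is purely formal, the content being simply that the family of conditions defining these submodules is itself $\zed/n$-equivariant. The only point requiring a line of care is confirming that the conjugation formula really does give a left action (so that the reindexing by $\sigma\mapsto\sigma\tau$ is legitimate), which is the routine computation already recorded in the excerpt.
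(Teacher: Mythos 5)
Your proof is correct and is precisely the formal verification the paper intends when it says the lemma is ``clear from the definition'': left translation by $\tau$ permutes the family of conditions indexed by $\zed/n$, and additivity of the sign handles the $\sgn$-isotypic part. (For the latter you could even shortcut further: $\tau\cdot\phi = (-1)^{\sgn(\tau)}\phi$ is an $R$-multiple of $\phi$, hence automatically stays in any $R$-submodule containing $\phi$.)
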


\begin{rem}
\label{rem:n_bracket}
For $2 \leq n \in \nat$, $\mder (A^{\otimes n}, A^{\otimes n}) ^{\sgn} $ is the 
$R$-module of $n$-brackets (in the terminology of 
 \cite[Definition 2.2.1]{vdB}). In particular, for  $n=2$, this 
gives the definition of a double bracket, namely an {\em anti-symmetric} 
bi-derivation and, for $n=3$, 
triple brackets are multi-derivations which are cyclically invariant.
\end{rem}

The following results provide analogues of Lemma \ref{lem:dder_prod_codomain}.

\begin{lem}
\label{lem:mder_product}
 Let $A$ be a commutative $R$-algebra and $2 \leq n\in \nat$. Multiplication in 
the codomain induces a morphism of  $R[\zed/n]$-modules  
 \[
  \mder (A^{\otimes n}, A^{\otimes n}) \otimes A^{\otimes n} \rightarrow   
\mder 
(A^{\otimes n}, A^{\otimes n}),
 \]
where the left hand side is equipped with the diagonal $\zed/n$-action.
\end{lem}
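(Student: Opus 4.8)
The plan is to produce, from a multi-derivation $\phi \in \mder(A^{\otimes n}, A^{\otimes n})$ and an element $\Xi \in A^{\otimes n}$, the map $\phi \cdot \Xi$ defined by $(\phi \cdot \Xi)(\alpha) := \phi(\alpha)\,\Xi$, where the product is taken in the commutative algebra $A^{\otimes n}$ at the codomain. The claim is twofold: first, that $\phi \cdot \Xi$ again lies in $\mder(A^{\otimes n}, A^{\otimes n})$; second, that the resulting bilinear pairing intertwines the diagonal $\zed/n$-action on the source with the conjugation action on the target. Since everything in sight is $R$-bilinear, this will indeed define a morphism of $R[\zed/n]$-modules as asserted.

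First I would check the derivation property. Fix $\sigma \in \zed/n$; we must show $\sigma \cdot (\phi \cdot \Xi)$ is a derivation with respect to the last variable. One computes $\sigma \cdot (\phi \cdot \Xi) = (\sigma \cdot \phi) \cdot (\sigma \Xi)$: indeed, for $\alpha \in A^{\otimes n}$, $(\sigma \cdot (\phi \cdot \Xi))(\alpha) = \sigma\big( \phi(\sigma^{-1}\alpha)\,\Xi \big) = \sigma(\phi(\sigma^{-1}\alpha)) \cdot \sigma(\Xi)$, using that $\sigma$ acts as an algebra automorphism of $A^{\otimes n}$ (this is where commutativity of $A$ enters — a place permutation is an algebra map only because $A$ is commutative). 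So it suffices to treat the case $\sigma = 1$, i.e. to show that if $\phi$ is a derivation in the last variable then so is $\phi \cdot \Xi$. Writing $\alpha = \beta \otimes (ab)$ with $\beta \in A^{\otimes n-1}$ and expanding via the hypothesis on $\phi$,
\[
(\phi \cdot \Xi)(\beta \otimes ab) = \big[ (a \otimes 1^{\otimes n-1})\phi(\beta \otimes b) + \phi(\beta \otimes a)(1^{\otimes n-1} \otimes b) \big] \Xi,
\]
and since $A^{\otimes n}$ is commutative the right factor $\Xi$ commutes past, giving exactly $(a \otimes 1^{\otimes n-1})(\phi \cdot \Xi)(\beta \otimes b) + (\phi \cdot \Xi)(\beta \otimes a)(1^{\otimes n-1} \otimes b)$. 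Thus $\phi \cdot \Xi \in \mder(A^{\otimes n}, A^{\otimes n})$.

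For the equivariance, the identity $\sigma \cdot (\phi \cdot \Xi) = (\sigma \cdot \phi) \cdot (\sigma \Xi)$ derived above is precisely the statement that the pairing is $\zed/n$-equivariant for the diagonal action on the source and the conjugation action on the target. Bilinearity over $R$ is immediate from the definition. The main (and only) subtlety is the one already flagged: that a place permutation $\sigma$ is an $R$-algebra automorphism of $A^{\otimes n}$, which holds precisely because $A$ is commutative, and this fact is used both in reducing the derivation check to $\sigma = 1$ and in the commutation of $\Xi$ past the bimodule factors; otherwise the argument is a routine unwinding of the definitions, parallel to Lemma \ref{lem:dder_prod_codomain}.
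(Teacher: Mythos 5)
Your proof is correct and is precisely the routine verification the paper leaves as ``straightforward'': define $(\phi\cdot\Xi)(\alpha)=\phi(\alpha)\,\Xi$, establish the identity $\sigma\cdot(\phi\cdot\Xi)=(\sigma\cdot\phi)\cdot(\sigma\Xi)$, and use it both to reduce the derivation check to $\sigma=1$ and to read off the equivariance. One small correction: a place permutation is an $R$-algebra automorphism of $A^{\otimes n}$ for \emph{any} associative $A$ (multiplication in a tensor product of algebras is componentwise, so it commutes with permuting the factors), hence commutativity of $A$ is not needed at that step; it is genuinely needed only where you invoke it second, to move $\Xi$ past the factor $1^{\otimes n-1}\otimes b$ in the Leibniz expansion.
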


\begin{proof}
 Straightforward.
\end{proof}

\begin{prop}
\label{prop:phin_morphism}
 Let $A$ be a commutative $R$-algebra and $2 \leq n\in \nat$. The map $\phi_n 
\in \hom 
_R (A^{\otimes n}, A^{\otimes n})$ defined by 
 \[
  \phi_n (a_1 \otimes \ldots \otimes a_n) 
  := 
  \prod _{\sigma \in \zed/n} \sigma (a_{\sigma(n)} \otimes 1^{\otimes n-1} - 
1^{\otimes n-1} \otimes a_{\sigma(n)}) 
 \]
(where the product is formed in $A^{\otimes n}$) is a $\zed/n$-equivariant 
multi-derivation (that is 
$
 \phi_n \in \mder (A^{\otimes n}, A^{\otimes n})^{\zed/n}
$).

In particular, $\phi_n$ together with the map of Lemma \ref{lem:mder_product} 
induce a $\zed/n$-equivariant map:
\[
 \Pi_n : A^{\otimes n} \rightarrow \mder (A^{\otimes n}, A^{\otimes n}).
\]
\end{prop}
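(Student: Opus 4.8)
The plan is to verify the two claimed properties of $\phi_n$ directly, namely that it is a derivation with respect to the last variable after applying any $\sigma \in \zed/n$, and that it is $\zed/n$-equivariant; the final assertion about $\Pi_n$ then follows formally from Lemma \ref{lem:mder_product} once these are established. First I would fix notation: write $\theta(a) := a \otimes 1^{\otimes n-1} - 1^{\otimes n-1} \otimes a \in A^{\otimes n}$, so that $\phi_n(a_1 \otimes \cdots \otimes a_n) = \prod_{\sigma \in \zed/n} \sigma\big(\theta(a_{\sigma(n)})\big)$, the product being taken in the commutative ring $A^{\otimes n}$ (commutativity of $A$ is what makes $A^{\otimes n}$ commutative, and this is used throughout). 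The point to isolate is that among the $n$ cyclic permutations $\sigma \in \zed/n$, exactly one—namely $\sigma = \mathrm{id}$—has $\sigma(n) = n$, so that the last variable $a_n$ appears in precisely one factor of the product, namely the factor $\theta(a_n)$, while $a_n$ appears in no other factor.

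For the multi-derivation property, I would compute $\phi_n(a_1 \otimes \cdots \otimes a_{n-1} \otimes bc)$. Since $a_n = bc$ enters only through the single factor $\theta(bc) = bc \otimes 1^{\otimes n-1} - 1^{\otimes n-1} \otimes bc$, and since all other factors are unchanged, I can use the identity $\theta(bc) = (b \otimes 1^{\otimes n-1})\,\theta(c) + \theta(b)\,(1^{\otimes n-1} \otimes c)$ in $A^{\otimes n}$, together with the fact that $b \otimes 1^{\otimes n-1}$ and $1^{\otimes n-1}\otimes c$ commute with every other factor $\sigma(\theta(a_{\sigma(n)}))$ for $\sigma \neq \mathrm{id}$ (again using commutativity of $A^{\otimes n}$). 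This immediately gives
\[
 \phi_n(\alpha \otimes bc) = (b \otimes 1^{\otimes n-1})\,\phi_n(\alpha \otimes c) + \phi_n(\alpha \otimes b)\,(1^{\otimes n-1} \otimes c),
\]
which is the derivation condition with respect to the last variable. To get that $\sigma \cdot \phi_n$ is also such a derivation for every $\sigma \in \zed/n$, I would deduce it from the equivariance $\sigma \cdot \phi_n = \phi_n$, so establishing equivariance is logically the more basic task.

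For $\zed/n$-equivariance, let $\tau$ denote the generator of $\zed/n$; it suffices to check $\tau \cdot \phi_n = \phi_n$, i.e. $\tau \circ \phi_n = \phi_n \circ \tau$. Applying $\tau$ to the defining product and using that $\tau$ is a ring automorphism of $A^{\otimes n}$, $\tau(\phi_n(a_1 \otimes \cdots \otimes a_n)) = \prod_{\sigma \in \zed/n} (\tau\sigma)\big(\theta(a_{\sigma(n)})\big)$; reindexing the product by $\sigma' = \tau\sigma$ (a bijection of $\zed/n$), and noting that the place $\tau^{-1}$ sends $a$'s so that $a_{\sigma(n)} = a_{(\tau^{-1}\sigma')(n)}$ matches exactly the coefficient appearing when $\phi_n$ is evaluated on $\tau(a_1 \otimes \cdots \otimes a_n)$, one obtains $\tau \circ \phi_n = \phi_n \circ \tau$. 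The main obstacle is purely bookkeeping: keeping the indexing of the cyclic permutations and the place-permutation action of $\tau$ consistent, so that the reindexed product genuinely coincides with $\phi_n(\tau(a_1 \otimes \cdots \otimes a_n))$; everything else is a routine consequence of commutativity of $A^{\otimes n}$ and of $\theta$ being (essentially) the double derivation $d_A$ placed in the last variable. Finally, combining $\phi_n \in \mder(A^{\otimes n}, A^{\otimes n})^{\zed/n}$ with Lemma \ref{lem:mder_product} applied to $\phi_n \otimes (-)$ yields the $\zed/n$-equivariant map $\Pi_n$, since $A^{\otimes n}$ with the $\zed/n$-action is the source and $\phi_n$ is a $\zed/n$-fixed point, so the diagonal action on $\phi_n \otimes A^{\otimes n}$ restricts to the given action on the second factor.
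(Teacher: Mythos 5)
Your proposal is correct and follows essentially the same route as the paper, which simply asserts that equivariance is clear from the construction and that the multi-derivation property is proved as for $d_A$; you have merely written out those two verifications (the Leibniz identity for $\theta(bc)$ in the unique factor containing $a_n$, plus the reindexing $\sigma' = \tau\sigma$ for equivariance, using that place permutations are ring automorphisms of the commutative ring $A^{\otimes n}$). The organisation --- first the last-variable derivation property, then equivariance to handle all $\sigma\cdot\phi_n$ --- is sound and matches the intended argument.
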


\begin{proof}
 That $\phi_n$ is $\zed/n$-equivariant is clear from the construction. The 
proof 
that it is a multi-derivation is analogous to the proof that  $d_A$ is a double 
derivation. 
 The final statement is then an immediate consequence of Lemma 
\ref{lem:mder_product}.
\end{proof}

\begin{rem}
\label{rem:phi2_phi3}
 For $n =2$, $\phi_2 (x \otimes y) = (x \otimes 1 - 1 \otimes x)(y \otimes 1 - 
1 
\otimes y)$ is clearly invariant under exchange of $x$ and $y$. This reflects 
the fact that, when $A$ is commutative, $A^{\otimes 2}$ has a canonical 
bimodule 
structure given by 
 the algebra structure. 
 
 For $n=3$, $\phi_{a,b,c} := \phi_3(a \otimes b \otimes c)$ is given explicitly 
by: 
  \begin{eqnarray*}
  \phi_{a,b,c} &=&
  (c \otimes 1 \otimes 1 - 1 \otimes 1 \otimes c) 
(1 \otimes 1 \otimes b - 1 \otimes b \otimes 1) 
( 1 \otimes a \otimes 1 - a \otimes 1 \otimes 1)
\\
&=&
ac \otimes b \otimes 1 - ac \otimes 1 \otimes b  
-c \otimes ab \otimes 1 + c \otimes a \otimes b - a  \otimes b \otimes c 
\\&&
+ a \otimes 1
\otimes bc  + 1 \otimes ab \otimes c - 1 \otimes a \otimes bc,
  \end{eqnarray*}
where the terms have been arranged using the left lexicographical order for the 
partial order corresponding to the number of terms in a monomial in $a, b,c$ of 
$A$. (Observe that there is a unique term of maximal lexicographical order, 
namely $ac \otimes b \otimes 1$.)

The  expression for $ \phi_3 (a \otimes b \otimes c)$ is normalized (up to 
sign) 
by the choice of the bimodule structure of $A^{\otimes 3}$, corresponding to 
the 
factor $(c \otimes 1 \otimes 1 - 1 \otimes 1 \otimes c)$. Although $\phi_3$ is 
invariant under the action of $\zed/3$, $\phi_3$ evaluated on 
$b \otimes c \otimes a$ clearly gives a different expression, contrary to the 
behaviour for $n=2$.
\end{rem}

\subsection{Graded algebras}
\label{subsect:graded_mder}

When $A$ is an $\zed$-graded $R$-algebra it is natural to consider the graded 
components of multi-derivations.

\begin{rem}
The grading is not taken into account in the symmetric monoidal structure on 
graded $R$-modules.
\end{rem}

\begin{lem}
\label{lem:grading_mder}
For $A$  a $\zed$-graded $R$-algebra which is finitely-generated as a graded 
algebra and $2 \leq n \in \nat$, there is a $R[\zed/n]$-equivariant 
decomposition into homogeneous components:
 \[
  \mder (A^{\otimes n}, A^{\otimes n}) \cong \bigoplus_{t} \mder (A^{\otimes 
n}, 
A^{\otimes n})^t
 \]
where $\mder (A^{\otimes n}, A^{\otimes n})^t$ is the submodule of morphisms of 
degree $t$.

In particular, any element $\phi \in   \mder (A^{\otimes n}, A^{\otimes n})$ 
can 
be written in terms of homogeneous components $\phi = \sum _{t \in \zed} \phi 
^t$, where $\phi^t = 0$ for $|t|\gg 0$. 
\end{lem}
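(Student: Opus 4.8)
The plan is to deduce the decomposition from the standard fact that a finitely-generated graded $R$-algebra has each homogeneous component finitely generated, together with a boundedness argument on the degrees of the generators. First I would fix a finite set of homogeneous generators $x_1, \ldots, x_m$ of $A$, of degrees $d_1, \ldots, d_m$, and set $D := \max_i |d_i|$. Any multi-derivation $\phi \in \mder(A^{\otimes n}, A^{\otimes n})$ is determined by its values on a generating set of $A^{\otimes n}$ as an $R$-module built out of monomials in the $x_i$, via the derivation-in-the-last-variable property applied in each of the $n$ cyclically-permuted slots; more precisely, $\phi$ is determined by its values on pure tensors of monomials, and the multi-derivation property lets one reduce the total degree of the tensor entered into $\phi$ at the cost of multiplying by fixed elements of $A^{\otimes n}$. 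This is the mechanism that forces homogeneous components to vanish for large $|t|$.

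Next I would make the grading on $\hom_R(A^{\otimes n}, A^{\otimes n})$ explicit: $A^{\otimes n}$ is a graded $R$-module (by total degree of a pure tensor, the grading on the monoidal product being the one noted in the preceding remark), so $\phi$ has degree $t$ if $\phi\big((A^{\otimes n})^s\big) \subseteq (A^{\otimes n})^{s+t}$ for all $s$. The decomposition $\hom_R(A^{\otimes n}, A^{\otimes n}) \supseteq \bigoplus_t \hom_R(A^{\otimes n}, A^{\otimes n})^t$ is formal; the content is (i) that this inclusion is an equality for the sub-$R$-module $\mder$, i.e.\ every multi-derivation is a \emph{finite} sum of homogeneous ones, and (ii) that each homogeneous piece $\phi^t$ is again a multi-derivation. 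For (ii) one simply checks that the defining identity $\phi(\alpha \otimes ab) = (a \otimes 1^{\otimes n-1})\phi(\alpha \otimes b) + \phi(\alpha \otimes a)(1^{\otimes n-1} \otimes b)$, and its $\zed/n$-conjugates, are homogeneous identities, so they hold degreewise; hence each $\phi^t \in \mder(A^{\otimes n}, A^{\otimes n})$, and $\zed/n$-equivariance of the decomposition follows because the $\zed/n$-action is by degree-preserving maps (place permutations preserve total degree).

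The main obstacle is (i): showing $\phi^t = 0$ for $|t| \gg 0$, equivalently that $\phi$ is a finite sum of homogeneous components. Here is the argument I would carry out. A pure tensor $\mathsf{m}_1 \otimes \cdots \otimes \mathsf{m}_n$ of monomials, each $\mathsf{m}_j$ of positive length, can have its last entry $\mathsf{m}_n = x_i \cdot \mathsf{m}_n'$ split off using the multi-derivation property, expressing $\phi(\mathsf{m}_1 \otimes \cdots \otimes \mathsf{m}_n)$ as a sum of two terms of the form (fixed element of $A^{\otimes n}$ of degree $d_i$)$\cdot \phi(\mathsf{m}_1 \otimes \cdots \otimes \mathsf{m}_{n-1} \otimes \mathsf{m}_n')$ and similar; using the $\zed/n$-conjugates one treats the other slots likewise. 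Iterating, $\phi(\mathsf{m}_1 \otimes \cdots \otimes \mathsf{m}_n)$ is an $A^{\otimes n}$-linear combination of the finitely many values $\phi(x_{i_1} \otimes \cdots \otimes x_{i_n})$ on tensors of single generators (plus boundary cases where some slot becomes $1$, which are finite in number too), where the scalars have degree equal to (total degree of input) minus (total degree of the relevant $\phi(x_{i_1} \otimes \cdots \otimes x_{i_n})$-argument). Since there are only finitely many such base values, each lying in the finite-dimensional-over-degrees object $A^{\otimes n}$ which has bounded-below and, crucially, the relevant boundedness needed, the degree-$t$ part of $\phi$ is controlled: $\phi^t$ can be nonzero only for $t$ in the finite window determined by the finitely many degrees $\deg \phi(x_{i_1}\otimes\cdots\otimes x_{i_n}) - \sum_k |d_{i_k}|$ and the degrees of the coefficient elements of $A^{\otimes n}$ appearing, which range over a set bounded above by $nD$ in each differencing step but actually yields a two-sided bound once one observes that lowering the input degree below $\sum_k(\text{length} \cdot \min|d_i|)$ is impossible. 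Assembling these bounds gives $\phi^t = 0$ for $|t| \gg 0$, completing the proof; the final displayed ``in particular'' is then just a restatement.
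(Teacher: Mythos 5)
Your argument follows the paper's (very terse) proof in essentially the same way: the multi-derivation property together with finite generation shows that $\phi$ is determined by its finitely many values on tensors of homogeneous generators, whence only finitely many homogeneous components $\phi^t$ are nonzero, and the verifications that each $\phi^t$ is again a multi-derivation and that the $\zed/n$-action preserves degree are exactly the ``straightforward'' part the paper leaves implicit. The only point to tidy is your final degree bookkeeping: in each reduction step the coefficient in $A^{\otimes n}$ has degree \emph{exactly} equal to the drop in input degree, so the degree shift of every term is $\deg\big(\text{a homogeneous component of }\phi(x_{i_1}\otimes\cdots\otimes x_{i_n})\big)-\sum_k d_{i_k}$ (no absolute values), and these shifts form a finite set --- the detour through the bound $nD$ per step and the lower bound on input degrees is unnecessary.
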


\begin{proof}
The multi-derivation property and the fact that $A$ is assumed to be 
finitely-generated implies that an element $\phi \in \mder (A^{\otimes n}, 
A^{\otimes n}) $ is determined by its restriction to $(V)^{\otimes n}$ for $V$ 
a 
finitely-generated graded $R$-submodule of $A$, 
so that $(V)^{\otimes n}$ is a finitely-generated $R$-module. The proof is then 
straightforward. 
\end{proof}

\begin{nota}
\label{nota:grading_mder}
 For $A, \phi \neq 0$ as in Lemma \ref{lem:grading_mder}, write $\phi^{\min}$ 
and $\phi^{\max}$ respectively for the non-trivial homogeneous components of 
minimal and maximal degrees. 
\end{nota}

\section{Recollections on double Poisson algebras}
\label{sect:recollect}

\begin{defn}
\label{defn:double_Jacobi}
 For $\phi \in \hom_R (A^{\otimes 2}, A^{\otimes 2})$, the 
double Jacobiator 
$\jac (\phi)$
is 
\[
 \jac (\phi) := \sum_{\sigma \in \zed/3} 
\sigma \cdot \big( 
(\phi \otimes 1_A ) \circ (1_A \otimes \phi)
\big) 
\in \hom_R (A^{\otimes 3}, A^{\otimes 3})^{\zed/3}.
\]
\end{defn}

A basic fact is the following (using the terminology of $n$-brackets recalled 
in 
Remark \ref{rem:n_bracket}).

\begin{prop}
\label{prop:jac_double_is_triple}
\cite[Proposition 2.3.1]{vdB}
If $\phi$ is a double bracket on $A$, then    $\jac (\phi)$ is a triple bracket.
\end{prop}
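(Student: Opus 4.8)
The plan is to reduce the claim to a single Leibniz identity and then to verify that identity by expanding the definition of $\jac(\phi)$. The first point is that, for $n=3$, the sign twist is invisible: every non-trivial element of $\zed/3\subset\sym_3$ is a $3$-cycle, hence an even permutation, so $\mder(A^{\otimes 3},A^{\otimes 3})^{\sgn}=\mder(A^{\otimes 3},A^{\otimes 3})^{\zed/3}$ and a triple bracket is exactly a $\zed/3$-equivariant multi-derivation, as recorded in Remark~\ref{rem:n_bracket}. Since $\jac(\phi)=\sum_{\sigma\in\zed/3}\sigma\cdot\bigl((\phi\otimes 1_A)\circ(1_A\otimes\phi)\bigr)$ is an average over $\zed/3$, it is fixed by the conjugation action of $\zed/3$ (which merely permutes the summands); this is already noted in Definition~\ref{defn:double_Jacobi}. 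It therefore remains to check that $\jac(\phi)$ is a multi-derivation, that is, that $\sigma\cdot\jac(\phi)$ is a derivation with respect to the last variable of $A^{\otimes 3}$ for every $\sigma\in\zed/3$; but $\sigma\cdot\jac(\phi)=\jac(\phi)$, so all these conditions collapse to the single requirement that $\jac(\phi)$ itself be a derivation with respect to the last variable of $A^{\otimes 3}$.

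Next I would record the form of antisymmetry that makes a double bracket a derivation in its \emph{first} argument as well. Writing $(-)^{\circ}$ for the flip $u\otimes v\mapsto v\otimes u$ of $A^{\otimes 2}$, antisymmetry says $\phi(a\otimes b)=-\phi(b\otimes a)^{\circ}$; combining this with the given derivation rule in the last argument and keeping track of how the flip interchanges left and right multiplication on $A^{\otimes 2}$, a short manipulation gives
\[
\phi(ab\otimes c)=(b\otimes 1)\,\phi(a\otimes c)+\phi(b\otimes c)\,(1\otimes a).
\]
Thus $\phi$ is a derivation in each of its arguments, for the appropriate bimodule structures on $A^{\otimes 2}$, and this two-sided rule is what is needed to expand the summands $\sigma\cdot\bigl((\phi\otimes 1_A)\circ(1_A\otimes\phi)\bigr)$ of $\jac(\phi)$ for $\sigma$ a non-trivial cycle, where the last variable of $A^{\otimes 3}$ is routed into the first slot of an inner copy of $\phi$.

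The heart of the argument is then the computation itself. Set $\alpha=u\otimes v$ and evaluate the three summands of $\jac(\phi)$ on $u\otimes v\otimes wz$, using the two derivation rules for $\phi$ on each of the two occurrences of $\phi$ in each summand; then expand $(w\otimes 1\otimes 1)\,\jac(\phi)(u\otimes v\otimes z)+\jac(\phi)(u\otimes v\otimes w)\,(1\otimes 1\otimes z)$ in the same way, and subtract. Many of the resulting monomials in $u,v,w,z$ cancel in pairs immediately; what is left is a collection of correction terms, each either a product in $A^{\otimes 3}$ of two bracket values $\phi(x\otimes y)\cdot\phi(x'\otimes y')$ placed in overlapping slots -- produced where the two copies of $\phi$ in a single summand interact -- or a ``displaced'' Leibniz term in which the freed factor of $w$ or $z$ occupies the wrong tensor slot. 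I expect the main obstacle to be precisely that these corrections sum to zero: this uses the full sum over $\zed/3$ (so that a correction produced by one summand is killed by corrections from the others) together with antisymmetry (to identify terms of the form $\phi(x\otimes y)$ with those of the form $-\phi(y\otimes x)^{\circ}$), and it does not simplify until everything is combined. The bookkeeping is finite but delicate; it is most manageable if one fixes a term order on the monomials in $u,v,w,z$ and sorts the corrections according to which pair of variables is fed into a common copy of $\phi$, much as in the explicit $n=3$ computation of Remark~\ref{rem:phi2_phi3}. Once this vanishing is established, the reduction of the first paragraph finishes the proof.
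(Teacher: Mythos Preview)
The paper does not give its own proof of this proposition: it is simply quoted from \cite[Proposition 2.3.1]{vdB}. Your outline is essentially the argument Van den Bergh gives there, namely to use the manifest $\zed/3$-invariance of $\jac(\phi)$ to reduce to checking the derivation property in the last variable, and then to expand using the two-sided Leibniz behaviour of a double bracket.

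One concrete correction: your displayed first-variable Leibniz rule is wrong for non-commutative $A$. Carrying out the ``short manipulation'' carefully (apply the flip to $\phi(c\otimes ab)=(a\otimes 1)\phi(c\otimes b)+\phi(c\otimes a)(1\otimes b)$ and then rewrite each flipped piece in terms of $\phi(b\otimes c)$ and $\phi(a\otimes c)$ using antisymmetry) gives
\[
\phi(ab\otimes c)=(1\otimes a)\,\phi(b\otimes c)+\phi(a\otimes c)\,(b\otimes 1),
\]
i.e.\ the \emph{inner} bimodule structure on $A^{\otimes 2}$, not the outer one. Your version $(b\otimes 1)\phi(a\otimes c)+\phi(b\otimes c)(1\otimes a)$ has the factors on the wrong sides; the two agree only when $A$ is commutative. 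Since the proposition is stated (and needed) for arbitrary associative $A$, this would make the subsequent expansion of $\jac(\phi)(u\otimes v\otimes wz)$ go wrong. Apart from this slip, the plan is sound and matches the cited source.
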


\begin{defn}
 \cite{vdB}
A double Poisson structure on $A$ is a double bracket $\dbo, \dbc : 
A\otimes A \rightarrow A \otimes A$, $a \otimes b \mapsto \dbo a, b \dbc$, 
such that the double Jacobiator  $\dbo , , \dbc := \jac \big(\dbo, \dbc \big)$
is zero (the {\em double Jacobi relation}).
\end{defn}

\begin{rem}
 For $A$, $\dbo, \dbc$ a double Poisson algebra, the bracket 
 \[
  \{ , \} : A^{\otimes 2} \rightarrow A 
 \]
defined as the composite of $\dbo, \dbc$ with the product of $A$ is a left 
Leibniz algebra, by \cite[Corollary 2.4.4]{vdB}. Moreover, 
  \cite[Proposition 1.4]{vdB} implies  that, if $A$ is 
commutative, $\{ , \}$ defines a Poisson algebra structure on $A$.
\end{rem}

\subsection{Graded algebras}

As in Section \ref{subsect:graded_mder}, let  $A$ be  a $\zed$-graded 
$R$-algebra which is finitely-generated as a graded algebra.

\begin{defn}
A double Poisson structure $\dbo, \dbc$  on  the graded  algebra $A$  is 
homogeneous if $\dbo, \dbc = \dbo , \dbc ^t$ for some $ t \in \zed$.
\end{defn}

As in Notation \ref{nota:grading_mder}, the following notation is adopted:

\begin{nota}
 For $A$ as above and $\dbo, \dbc$ a double Poisson structure on $A$,  write  
$\dbo, \dbc ^{\min} $ and $\dbo , 
\dbc ^{\max}$ for the components of  minimal (respectively maximal) degree.
\end{nota}

\begin{lem}
\label{lem:homog_dP}
For $A$, $\dbo, \dbc$ as above, $\dbo, \dbc ^{\min} $ and $\dbo , \dbc ^{\max}$ 
define homogeneous 
double Poisson structures on $A$.
\end{lem}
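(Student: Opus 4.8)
The plan is to exploit the grading decomposition of Lemma~\ref{lem:grading_mder} together with the fact that the double Jacobiator $\jac$ is built from compositions and tensor products of the double bracket, hence respects the grading. First I would observe that if $\dbo,\dbc = \sum_t \dbo,\dbc^t$ is the decomposition into homogeneous components (finitely many nonzero, by Lemma~\ref{lem:grading_mder}), then each $\dbo,\dbc^t$ is itself a double bracket: antisymmetry (the $\sgn$-condition with respect to $\zed/2$) and the multi-derivation property are preserved by passing to a homogeneous component, since the $\zed/n$-action and the derivation-with-respect-to-the-last-variable condition are compatible with the grading decomposition. In particular $\dbo,\dbc^{\min}$ and $\dbo,\dbc^{\max}$ are double brackets, so by Proposition~\ref{prop:jac_double_is_triple} their Jacobiators are triple brackets; what remains is to show these Jacobiators vanish.

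The key point is that $\jac$ is quadratic in $\phi$: writing $\jac(\phi) = \sum_{\sigma \in \zed/3} \sigma\cdot\big((\phi \otimes 1_A)\circ(1_A\otimes\phi)\big)$, the bilinear pairing $(\phi,\psi)\mapsto \sum_{\sigma}\sigma\cdot\big((\phi\otimes 1_A)\circ(1_A\otimes\psi)\big)$ sends a pair of homogeneous components of degrees $s$ and $t$ to something homogeneous of degree $s+t$. Hence the homogeneous component of $\jac(\dbo,\dbc)$ of lowest degree is exactly the contribution $\jac(\dbo,\dbc^{\min})$ coming from the two lowest-degree factors, since any other product $\sigma\cdot((\dbo,\dbc^s \otimes 1)\circ(1\otimes \dbo,\dbc^t))$ with $s,t \geq \min$ and $(s,t)\neq(\min,\min)$ has strictly larger total degree $s+t > 2\min$. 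Concretely, if $m$ denotes the minimal degree, then $\jac(\dbo,\dbc)^{2m} = \jac(\dbo,\dbc^{\min})$. Since $\dbo,\dbc$ is a double Poisson structure, $\jac(\dbo,\dbc) = 0$, so in particular its degree-$2m$ component vanishes, giving $\jac(\dbo,\dbc^{\min}) = 0$; thus $\dbo,\dbc^{\min}$ satisfies the double Jacobi relation and is a double Poisson structure, which is homogeneous by construction. The argument for $\dbo,\dbc^{\max}$ is symmetric, using the top-degree component $\jac(\dbo,\dbc)^{2M} = \jac(\dbo,\dbc^{\max})$ where $M$ is the maximal degree.

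The main obstacle — really the only subtle point — is verifying carefully that the grading is respected by all the operations involved: that the $\zed/3$-action on $\hom_R(A^{\otimes 3}, A^{\otimes 3})$ preserves homogeneous degree, that $\phi \otimes 1_A$ and $1_A \otimes \phi$ have the same degree as $\phi$, and that composition adds degrees. All of these follow from the conventions fixed in Section~\ref{subsect:graded_mder} (in particular the remark that the grading is not twisted in the symmetric monoidal structure), so this amounts to a routine bookkeeping check rather than a genuine difficulty. One should also confirm that the isolation of the extreme-degree term is valid, i.e.\ that there is no cancellation hiding the lowest term — but this is automatic because $\jac(\dbo,\dbc^{\min})$ is the \emph{unique} contribution to the degree-$2m$ part, so it equals the degree-$2m$ component of $0$ on the nose. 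With these points checked, the proof is complete.
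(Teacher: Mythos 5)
Your argument is correct and is exactly the ``straightforward'' proof the paper has in mind: homogeneous components of a double bracket are again double brackets, the Jacobiator is quadratic and additive in the grading, so its extreme-degree components are $\jac(\dbo,\dbc^{\min})$ and $\jac(\dbo,\dbc^{\max})$, which must vanish since $\jac(\dbo,\dbc)=0$. No gaps; the bookkeeping points you flag (degree-preservation of the $\zed/3$-action and additivity of degrees under composition) are indeed all that needs checking.
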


\begin{proof}
 Straightforward.
\end{proof}

\section{Multi-derivations for polynomial algebras}

In this section, $A$ is taken to be the polynomial algebra $R[t_1, \ldots 
,t_d]$, where $d \geq 2$ and $R$ is a commutative unital ring. Hence $A ^{\otimes 2}$ 
is a polynomial algebra on $2d$ generators, and the elements $t_i \otimes 1 - 1 
\otimes t_i$ are algebraically independent and can be extended to a set of 
algebra generators of $A^{\otimes 2}$. In particular, the elements $t_i \otimes 
1 -  1 \otimes t_i$ are regular elements of $A ^{\otimes 2}$.

A key observation is the following:

\begin{lem}
\label{lem:dder_poly}
 Let $A$  be the polynomial algebra $R[t_1, \ldots ,t_d]$, where  $d \geq 2$. 
The morphism of $R$-modules of Lemma \ref{lem:dder_prod_codomain}
  \[
 \Pi :  A^{\otimes 2} \rightarrow \dder (A)
 \]
is an isomorphism.
\end{lem}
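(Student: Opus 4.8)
The plan is to show that $\Pi$ is both injective and surjective by exploiting the fact that a double derivation on a polynomial algebra is completely determined by its values on the generators $t_1,\dots,t_d$, together with the Leibniz rule, and that the elements $t_i\otimes 1-1\otimes t_i$ are regular in $A^{\otimes 2}$. First I would set up the following basis-free reformulation: since $A=R[t_1,\dots,t_d]$ is free as an $R$-module on monomials, and $A^{\otimes 2}=R[t_1\otimes 1,\dots,t_d\otimes 1,1\otimes t_1,\dots,1\otimes t_d]$, the outer-bimodule derivation property means that $\psi\in\dder(A)$ is uniquely determined by the tuple $(\psi(t_1),\dots,\psi(t_d))\in (A^{\otimes 2})^{d}$, and conversely any such tuple extends to a (necessarily unique) double derivation. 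Thus $\dder(A)\cong (A^{\otimes 2})^{d}$ as $R$-modules, via $\psi\mapsto(\psi(t_i))_i$.

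Next I would compute $\Pi$ under this identification. For $\Theta\in A^{\otimes 2}$, the double derivation $\Pi(\Theta)$ sends $a\mapsto (a\otimes 1-1\otimes a)\Theta$, so on generators it is the tuple $\big((t_i\otimes 1-1\otimes t_i)\Theta\big)_{i=1}^{d}$. Hence, under the identification $\dder(A)\cong(A^{\otimes 2})^{d}$, the map $\Pi$ is exactly $\Theta\mapsto\big(e_i\Theta\big)_i$ where $e_i:=t_i\otimes 1-1\otimes t_i$.

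\emph{Injectivity.} If $\Pi(\Theta)=0$ then $e_i\Theta=0$ for all $i$; since each $e_i$ is a regular element of $A^{\otimes 2}$ (as noted before the lemma), $\Theta=0$.

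\emph{Surjectivity.} This is the main obstacle, and is where $d\ge 2$ is used. Given $\psi\in\dder(A)$ with $\psi(t_i)=\Theta_i\in A^{\otimes 2}$, I must produce $\Theta\in A^{\otimes 2}$ with $e_i\Theta=\Theta_i$ for every $i$; equivalently, the tuple $(\Theta_i)_i$ must lie in the image of the diagonal-type map above. The key input is a compatibility (``cocycle'') constraint that the $\Theta_i$ automatically satisfy, coming from the fact that $\psi$ is a genuine double derivation: applying the Leibniz rule to $t_it_j=t_jt_i$ and using commutativity of $A$ (so that the outer-bimodule terms can be rearranged) yields the identity $(t_i\otimes 1-1\otimes t_i)\Theta_j=(t_j\otimes 1-1\otimes t_j)\Theta_i$, i.e. $e_i\Theta_j=e_j\Theta_i$ for all $i,j$. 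Now I invoke regularity: in the polynomial ring $A^{\otimes 2}$ the sequence $e_1,\dots,e_d$ (for $d\ge 2$ these are part of a regular system of parameters, after the change of variables mentioned in the paragraph preceding the lemma) is a regular sequence, so the Koszul complex is exact; in particular the first syzygies among the $e_i$ are generated by the trivial (Koszul) relations $e_i\,e_j-e_j\,e_i$. The relation $e_i\Theta_j=e_j\Theta_i$ says precisely that the $1$-chain $\sum_i \Theta_i\,\varepsilon_i$ is a Koszul $1$-cycle; exactness of the Koszul complex in degree $1$ (which holds once $d\ge 2$) then gives a $2$-chain whose boundary is this cycle, i.e. an element $\Theta\in A^{\otimes 2}$ with $e_i\Theta=\Theta_i$. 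This $\Theta$ satisfies $\Pi(\Theta)=\psi$ on generators, hence $\Pi(\Theta)=\psi$, proving surjectivity.

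Putting the two halves together gives that $\Pi$ is an isomorphism. I expect the only delicate point to be the Koszul/regular-sequence argument for surjectivity — specifically, checking that $e_1,\dots,e_d$ form a regular sequence in $A^{\otimes 2}$ over an arbitrary commutative ring $R$ (which follows since, after the linear change of coordinates replacing $t_i\otimes 1$ by $e_i$, they become $d$ of the $2d$ polynomial generators) and that this case truly requires $d\ge 2$, since for $d=1$ there is no compatibility condition to exploit and $\Pi$ genuinely fails to be surjective.
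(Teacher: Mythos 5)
Your proof follows essentially the same route as the paper's: both derive the compatibility relation $e_i\Theta_j=e_j\Theta_i$ (where $e_i=t_i\otimes 1-1\otimes t_i$ and $\Theta_i=\psi(t_i)$) from the Leibniz rule applied to $t_it_j=t_jt_i$, and both then use regularity of the $e_i$ in the polynomial ring $A^{\otimes 2}$ to produce a single $\Theta$ with $e_i\Theta=\Theta_i$. The paper does this last step by a direct divisibility argument on a pair $i\neq j$; you package it as exactness of the Koszul complex on the regular sequence $e_1,\dots,e_d$, which is legitimate and has the merit of isolating precisely where $d\geq 2$ enters.

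Two corrections are needed, neither of which is fatal. First, your preliminary claim that $\psi\mapsto(\psi(t_i))_i$ gives an isomorphism $\dder(A)\cong (A^{\otimes 2})^{d}$ (``any tuple extends'') is false, and if it were true it would contradict the lemma you are proving, since the tuple $(1,0,\dots,0)$ is visibly not of the form $(e_i\Theta)_i$. A tuple extends to a double derivation of the \emph{commutative} algebra $A$ exactly when it satisfies the relations $e_i\Theta_j=e_j\Theta_i$ (these are the conditions for the derivation of the tensor algebra to kill the commutators $t_it_j-t_jt_i$). Your surjectivity argument only uses the correct constraint and the fact that $\psi$ is determined by its values on generators, so the misstatement is not load-bearing, but it should be deleted. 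Second, your Koszul bookkeeping is dualized: $e_i\Theta_j=e_j\Theta_i$ is not the condition that $\sum_i\Theta_i\varepsilon_i$ is a Koszul $1$-cycle (that would read $\sum_i e_i\Theta_i=0$); it is the cocycle condition in the complex $S\rightarrow S^{d}\rightarrow\Lambda^2(S^{d})$, $\Theta\mapsto(e_i\Theta)_i$, whose cohomology at $S^d$ is $H_{d-1}$ of the Koszul chain complex and therefore vanishes precisely when the sequence is regular and $d\geq 2$. With those adjustments the argument is complete.
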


\begin{proof}
It is straightforward to see that $\Pi$ is a monomorphism of $R$-modules, hence 
it suffices to check surjectivity.
 Consider $\phi \in \dder (A)$ and $x, y \in A$, the commutativity relation $xy 
= yx$ gives in $A^{\otimes 2}$
 \[
  \phi (xy) = (x\otimes 1) \phi (y) + \phi (x) (1 \otimes y) = (y \otimes 1) 
\phi (x) + \phi (y) (1 \otimes x),  
 \]
thus
\begin{eqnarray}
\label{eqn:fund}
 (x \otimes 1 - 1 \otimes x) \phi (y) = (y \otimes 1- 1 \otimes y) \phi (x).
\end{eqnarray}
The double derivation property of $\phi$ implies that $\phi$ is determined by 
its restriction to the $R$-module generated by the $t_i$ and, by $R$-linearity, 
by the elements $\phi (t_i)$, $i \in \{1, \ldots ,d\}$. 
Taking $x= t_i$ and $y = t_j$ for $i \neq j$ (recall that $d \geq 2$ by 
hypothesis), equation (\ref{eqn:fund}) implies that 
\[
 \phi(t_\alpha ) = (t_\alpha \otimes 1 - 1 \otimes t_\alpha ) \Theta
\]
for $\alpha \in \{i, j \}$ and for some $\Theta \in A^{\otimes 2}$. Hence this 
equation holds for all $\alpha \in \{1, \ldots ,d\}$, showing that $\phi = 
\Pi(\Theta) $, as required.
\end{proof}

\begin{rem}
\label{rem:gen_dder}
The above argument extends to treat the map 
\[
\Pi_M :  M \rightarrow \der (A, M),
\]
when $M$ is a free $A^{\otimes 2}$-module (the module structure giving the 
$A$-bimodule structure). 
\end{rem}

\begin{thm}
 \label{thm:multider_poly}
 Let $A$ be the polynomial algebra $R[t_1, \ldots ,t_d]$, where  $d \geq 2$.
For $2 \leq n \in \nat $ the morphism of $R$-modules of Proposition 
\ref{prop:phin_morphism}, 
 \begin{eqnarray*}
\Pi_n : A ^{\otimes n} & \rightarrow & \mder ( A ^{\otimes n},  A ^{\otimes n}), 
  \end{eqnarray*}
is an isomorphism of $R[\zed/n]$-modules.
\end{thm}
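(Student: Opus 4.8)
The plan is to mimic the proof of Lemma~\ref{lem:dder_poly}, generalising the argument from double derivations to $n$-derivations via the same commutativity-relation trick, applied one tensor slot at a time. First I would record that $\Pi_n$ is injective: since $\phi_n(t_{i_1}\otimes\cdots\otimes t_{i_n})$ contains, after expansion, the monomial $t_{i_1}\cdots t_{i_n}\otimes 1\otimes\cdots\otimes 1$ with coefficient $1$ (compare Remark~\ref{rem:phi2_phi3}, where $ac\otimes b\otimes 1$ is the unique term of maximal lexicographical order), the leading terms of distinct monomials are distinct, so no nontrivial $A^{\otimes n}$-combination of the $\phi_n$-values can vanish; more cleanly, one can argue that $\Pi_n$ is split by the map sending $\psi$ to its ``leading part'' under the monomial filtration. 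So the crux is surjectivity.

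For surjectivity, take $\psi\in\mder(A^{\otimes n},A^{\otimes n})$. By the multi-derivation property in the last variable (and its $\zed/n$-translates), $\psi$ is determined by the values $\psi(t_{i_1}\otimes\cdots\otimes t_{i_n})$ for $i_1,\dots,i_n\in\{1,\dots,d\}$, by $R$-multilinearity. Now fix all but the last slot and exploit commutativity there exactly as in \eqref{eqn:fund}: for $x,y\in A$ and $\alpha\in A^{\otimes n-1}$, the relation $\psi(\alpha\otimes xy)=\psi(\alpha\otimes yx)$ yields
\[
(x\otimes 1^{\otimes n-1}-1^{\otimes n-1}\otimes x)\,\psi(\alpha\otimes y)
=(y\otimes 1^{\otimes n-1}-1^{\otimes n-1}\otimes y)\,\psi(\alpha\otimes x).
\]
Taking $x=t_i$, $y=t_j$ with $i\neq j$ (using $d\geq 2$) and that $t_i\otimes 1^{\otimes n-1}-1^{\otimes n-1}\otimes t_i$ is a regular element of $A^{\otimes n}$, this forces
$\psi(\alpha\otimes t_k)=(t_k\otimes 1^{\otimes n-1}-1^{\otimes n-1}\otimes t_k)\,\Theta_\alpha$
for all $k$, for a single $\Theta_\alpha\in A^{\otimes n}$ depending $R$-linearly on $\alpha$. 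Thus $\psi(\alpha\otimes a)=\phi_A^{(n)}(a)\cdot\Theta_\alpha$, where $\phi_A^{(n)}(a):=a\otimes 1^{\otimes n-1}-1^{\otimes n-1}\otimes a$ is the factor appearing (for the last slot) in the definition of $\phi_n$.

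The remaining work is to show that the ``outer'' dependence of $\Theta_\alpha$ on $\alpha$ is itself governed by the other $n-1$ factors of $\phi_n$, i.e.\ that $\alpha\mapsto\Theta_\alpha$ is, up to the factor already extracted, again of the same shape in the preceding variables. Here I would apply the $\zed/n$-action: since $\psi\in\mder$ means $\sigma\cdot\psi$ is a derivation in the last variable for every $\sigma\in\zed/n$, the same extraction argument applied to $\sigma\cdot\psi$ peels off the regular factor $\sigma(t_k\otimes 1^{\otimes n-1}-1^{\otimes n-1}\otimes t_k)$ corresponding to the slot $\sigma^{-1}(n)$. Running this over all $\sigma\in\zed/n$ and using regularity of each of the $n$ elements $\sigma(t_k\otimes 1^{\otimes n-1}-1^{\otimes n-1}\otimes t_k)$ in the UFD $A^{\otimes n}$ (they are, up to the set of algebra generators, distinct irreducibles), one concludes by unique factorisation that $\psi(t_{i_1}\otimes\cdots\otimes t_{i_n})$ is divisible by the product $\prod_{\sigma\in\zed/n}\sigma(t_{i_{\sigma(n)}}\otimes 1^{\otimes n-1}-1^{\otimes n-1}\otimes t_{i_{\sigma(n)}})=\phi_n(t_{i_1}\otimes\cdots\otimes t_{i_n})$, say $\psi(t_{i_1}\otimes\cdots\otimes t_{i_n})=\phi_n(t_{i_1}\otimes\cdots\otimes t_{i_n})\cdot\Xi_{i_1,\dots,i_n}$. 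A degree/leading-term comparison (the product of the $n$ linear factors already accounts for the top-degree behaviour dictated by the multi-derivation property) then shows $\Xi_{i_1,\dots,i_n}$ is constant in a suitable sense, so $\psi=\Pi_n(\Theta)$ for the element $\Theta\in A^{\otimes n}$ assembled from these coefficients. Finally, $\zed/n$-equivariance of $\Pi_n$ is already given by Proposition~\ref{prop:phin_morphism}, and equivariance of the inverse is automatic.

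I expect the main obstacle to be the divisibility/unique-factorisation step: carefully checking that the $n$ elements $\sigma(t_i\otimes 1^{\otimes n-1}-1^{\otimes n-1}\otimes t_i)$ (for the various $\sigma\in\zed/n$, with $i$ fixed) are pairwise non-associate regular elements and that the successive extractions are compatible, so that one legitimately obtains divisibility by their product rather than merely by each factor separately. One must also be slightly careful because $R$ need not be a domain; but since the $t_i\otimes 1^{\otimes n-1}-1^{\otimes n-1}\otimes t_i$ and their $\zed/n$-translates form part of a polynomial generating set of $A^{\otimes n}$ over $R$, the needed regularity and ``relatively prime'' statements hold over an arbitrary commutative ring $R$, and the argument goes through verbatim.
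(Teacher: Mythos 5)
Your opening extraction step is exactly the paper's: the commutativity relation in the last slot (the analogue of equation (\ref{eqn:fund})) together with the regularity of the elements $t_i\otimes 1^{\otimes n-1}-1^{\otimes n-1}\otimes t_i$ yields $\psi(\alpha\otimes t_k)=(t_k\otimes 1^{\otimes n-1}-1^{\otimes n-1}\otimes t_k)\,\Theta_\alpha$ with $\Theta_\alpha$ independent of $k$ (this is Lemma \ref{lem:dder_poly} and Remark \ref{rem:gen_dder}). Where you diverge is in handling the remaining $n-1$ slots, and there the argument has a genuine gap. The decisive claim --- that after establishing divisibility of each $\psi(t_{i_1}\otimes\cdots\otimes t_{i_n})$ by $\phi_n(t_{i_1}\otimes\cdots\otimes t_{i_n})$ the quotients $\Xi_{i_1,\dots,i_n}$ are all equal to a single $\Theta$ --- is supported only by ``a degree/leading-term comparison''. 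But the multi-derivation property places no constraint on the degrees of the values of $\psi$ on generators (in the free associative case these values are arbitrary, as the paper remarks), so no leading-term argument can force the quotients to agree. The uniformity must come from the commutativity relations themselves: from the fact that $\Theta_\alpha$ is a \emph{single} element independent of the last generator, together with the analogous statements for the $\zed/n$-translates, chained one slot at a time (for instance, cancelling the regular factor shows $\Xi_{i_1,\dots,i_{n-1},k}$ is independent of $k$, and so on). The paper packages exactly this chaining as a recursion: after cancelling $t_1\otimes 1^{\otimes n-1}-1^{\otimes n-1}\otimes t_1$, the assignment $\alpha\mapsto\Theta_\alpha$ is itself a multi-derivation in $n-1$ variables, and Lemma \ref{lem:dder_poly} is applied again.

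The coprimality step is also justified incorrectly. The $n$ elements $\sigma(t_{i_{\sigma(n)}}\otimes 1^{\otimes n-1}-1^{\otimes n-1}\otimes t_{i_{\sigma(n)}})$ are \emph{not} jointly extendable to a polynomial generating set: already for $n=3$ with $i_1=i_2=i_3$ the three linear forms $x_1-x_3$, $x_3-x_2$, $x_2-x_1$ (in the notation $x_j$ for the generator in the $j$-th slot) sum to zero, hence are algebraically dependent; moreover $A^{\otimes n}$ is not a UFD for general $R$, so ``unique factorisation'' is not available. The implication ``divisible by each factor $\Rightarrow$ divisible by the product'' can be rescued by checking directly that each factor is a nonzerodivisor modulo the product of the others (a computation in $R[u,v,\dots]/(uv\cdots)$), but this must actually be done, and even then it yields only divisibility, not the uniform quotient discussed above. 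The paper's recursive route avoids both difficulties, using only the regularity of each single element $t_i\otimes 1^{\otimes n-1}-1^{\otimes n-1}\otimes t_i$.
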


\begin{proof}
 It clearly suffices to prove that $\Pi_n$ is an isomorphism of $R$-modules. It 
is straightforward to check that $\Pi_n$ is a monomorphism, thus it suffices to 
show that $\Pi_n$ is surjective. 
 
First consider the case $n=2$ and take $\phi \in \mder (A^{\otimes 2}, 
A^{\otimes 2})$; then, for fixed $a 
\in A$, the map $\phi (a \otimes - ) : A\rightarrow A^{\otimes 2}$ belongs to 
$\dder (A)$, hence Lemma \ref{lem:dder_poly} implies that 
 \[
  \phi (a \otimes b) = (b \otimes 1 - 1 \otimes b) \Theta_a 
 \]
for some $\Theta_a \in A^{\otimes 2}$ that is independent of $b$. 
 
Now take $b=t_1$, so that $b \otimes 1 - 1 \otimes b$ is a regular element of 
$A^{\otimes 2}$. 
It follows that  $a \mapsto \Theta_a$ defines a double derivation of $\dder 
(A)$. Again by  Lemma \ref{lem:dder_poly}, $\Theta_a$ can be written as $(a 
\otimes 1 -1 \otimes a) \Theta$ , for some $\Theta \in A^{\otimes 2}$ that is 
independent of $a$, so that 
\[
 \phi (a \otimes b ) = (a \otimes 1 -1 \otimes a)(b \otimes 1 - 1 \otimes b)  
\Theta
\]
for {\em any} $a, b \in A$, as required.

For  $n >2$, the above argument is modified in the obvious way, by appealing 
to Remark \ref{rem:gen_dder}. For example, given $\phi \in \mder (A^{\otimes 
n}, A^{\otimes n})$, fix $\alpha \in A^{\otimes n-1}$ and consider the map 
$\phi (\alpha \otimes - )$ as belonging to $\der (A, A^{\otimes n})$, where 
$A^{\otimes n}$ is the free bimodule with respect to the outer bimodule 
structure. As above, one deduces that 
 \[
  \phi (\alpha  \otimes b) = (b \otimes 1^{\otimes n-1} - 1^{\otimes n-1} 
\otimes b) \Theta_\alpha 
 \]
where $\Theta_\alpha$ is independent of $b$. The argument is then repeated 
recursively, starting as above by analysing $\Theta_{\alpha}$, at each step 
reducing the number of dependencies. 
\end{proof}

\begin{rem}
 The argument for the case $n=2$ (and, by extension, the general case) depends 
 on the fact that each  $t_i \otimes 1- 1 \otimes t_i$ is a regular 
element. Clearly the argument fails in general for $A$ an arbitrary commutative 
ring; even the injectivity of $\Pi_n$ need not hold. 
\end{rem}

\begin{exam}
\label{exam:exotic}
 For $A = \kf [t]$, with $\kf$ a field, there is a double bracket defined by 
 \[
  t \otimes t \mapsto t \otimes 1 - 1 \otimes t
 \]
(see Section \ref{sect:symm}). This is clearly not in the image of $\Pi_2$.
\end{exam}

\begin{rem}
 For the free associative algebra $T (V)$ on a free $R$-module $V$ and $2\leq n 
\in \nat$, any morphism $V^{\otimes n} \rightarrow T(V)^{\otimes n}$ extends 
uniquely to an element of $\mder (T(V)^{\otimes n}, T(V)^{\otimes n})$ (and 
clearly every multi-derivation is determined by its restriction to $V^{\otimes 
n}$). 
The corresponding result is false in the commutative case; Theorem 
\ref{thm:multider_poly} provides an analogous (but much stronger) result.  
\end{rem}

For $A$ a commutative $R$-algebra, the multiplication $\mu : A^{\otimes2} 
\rightarrow A$ induces an $R$-linear map 
$
 \hom_R (A^{\otimes 2} , A^{\otimes 2} ) 
 \rightarrow 
  \hom_R (A^{\otimes 2} , A) 
$
which restricts to a map $ \mder (A^{\otimes 2} , A^{\otimes 2} ) 
 \rightarrow 
  \hom_R (A^{\otimes 2} , A)$.

\begin{cor}
\label{cor:induced_bracket_trivial}
  Let $A$ be the polynomial algebra $R[t_1, \ldots ,t_d]$, where  $d \geq 2$.
  Then the morphism of $R$-modules 
  \[
   \mder (A^{\otimes 2} , A^{\otimes 2} ) 
 \rightarrow 
  \hom_R (A^{\otimes 2} , A)
  \]
is trivial.
\end{cor}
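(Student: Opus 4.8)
The plan is to combine Theorem~\ref{thm:multider_poly} with an explicit computation of the composite $\mu \circ \Pi_2$. By Theorem~\ref{thm:multider_poly}, every multi-derivation $\phi \in \mder(A^{\otimes 2}, A^{\otimes 2})$ is of the form $\phi = \Pi_2(\Theta)$ for a unique $\Theta \in A^{\otimes 2}$, so it suffices to show that $\mu \circ \Pi_2(\Theta) = 0$ for all $\Theta$. Unwinding the definitions, $\Pi_2(\Theta)$ sends $a \otimes b$ to $\phi_2(a \otimes b)\cdot\Theta = (a \otimes 1 - 1 \otimes a)(b \otimes 1 - 1 \otimes b)\Theta$ (using the description in Remark~\ref{rem:phi2_phi3} and Lemma~\ref{lem:mder_product}). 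Applying the multiplication map $\mu : A^{\otimes 2} \to A$, which is an algebra homomorphism since $A$ is commutative, we get $\mu\big((a \otimes 1 - 1 \otimes a)(b \otimes 1 - 1 \otimes b)\Theta\big) = \mu(a \otimes 1 - 1 \otimes a)\,\mu(b \otimes 1 - 1 \otimes b)\,\mu(\Theta) = (a-a)(b-b)\mu(\Theta) = 0$.

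So the key steps, in order, are: first, invoke Theorem~\ref{thm:multider_poly} to reduce to the image of $\Pi_2$; second, write down $\Pi_2(\Theta)(a \otimes b)$ explicitly as a product in $A^{\otimes 2}$ involving the factor $(a \otimes 1 - 1 \otimes a)$; third, use that $\mu$ is an algebra map to factor the multiplication through this product; fourth, observe that the factor $a \otimes 1 - 1 \otimes a$ lies in $\ker\mu$, so the whole thing vanishes. I would also remark briefly that this immediately recovers the higher-Krull-dimension half of the introduction's statement that no non-trivial Crawley-Boevey non-commutative Poisson structure arises, since such a structure requires the induced bracket $\{\,,\,\}=\mu\circ\dbo,\dbc$ to be non-zero.

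I do not anticipate a genuine obstacle here: the corollary is essentially a formal consequence of the structure theorem plus the trivial fact that $d_A$ takes values in $\ker\mu$. The only point requiring a modicum of care is keeping track of which bimodule structure on $A^{\otimes 2}$ is being used when applying Lemma~\ref{lem:mder_product}, and confirming that the ``extra'' factor produced by $\phi_2$ carries the distinguished factor $a \otimes 1 - 1 \otimes a$ (equivalently $d_A(a)$) that is annihilated by $\mu$; this is exactly the content of Remark~\ref{rem:phi2_phi3}. Thus the proof is short: reduce via surjectivity of $\Pi_2$, then compute $\mu \circ \Pi_2 = 0$ directly.
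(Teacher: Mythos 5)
Your proof is correct and is essentially the paper's own argument: the paper reduces to the image of $\Pi_2$ via Theorem \ref{thm:multider_poly} and observes "by inspection" that $\mu$ kills $\phi_2$, which is exactly your computation that $\mu$ is an algebra map (as $A$ is commutative) annihilating the factor $a \otimes 1 - 1 \otimes a = d_A(a)$. You have simply made explicit the steps the paper leaves to the reader.
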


\begin{proof} 
 By inspection, the map $\phi_2$ is sent to zero, whence the result, by Theorem 
\ref{thm:multider_poly}, using the definition of $\Pi_2$.
\end{proof}

\begin{rem}
 Corollary \ref{cor:induced_bracket_trivial} shows that no Poisson structure on 
$R[t_1, \ldots ,t_d]$ is induced by a multi-derivation. This shows 
 that the weakening of the notion of double Poisson structure proposed in 
\cite{arth} does not provide further non-trivial examples of non-commutative 
Poisson structures 
 (in the sense of \cite{CB}).
\end{rem}

\section{Double Poisson structures on polynomial algebras}

Let $R$ be a commutative unital ring and $A$ be a commutative $R$-algebra.

\begin{nota}
 Write 
\begin{enumerate}
 \item 
 $\Lambda^2 (A) \subset A^{\otimes 2}$ for the sub $R$-module of 
anti-commutative 
elements 
(namely the kernel of $\mathrm{id} + \tau : A^{\otimes 2} \circlearrowleft$, 
where $\tau$ transposes the tensor factors);
\item 
$(A^{\otimes 3})^{\zed/3} \subset A^{\otimes 3}$ for the sub $R$-module of 
cyclically invariant elements.
\end{enumerate}
\end{nota}

 \begin{rem}
  In characteristic two the above does not give the usual definition of 
$\Lambda^2 (A)$.
 \end{rem}

\begin{prop}
\label{prop:double_triple} 
Let $A = R[t_1, \ldots, t_d]$, where $d \geq 2$. 
\begin{enumerate}
 \item 
 The isomorphism $\Pi_2 : A^{\otimes 2} \stackrel{\cong}{\rightarrow} \mder 
(A^{\otimes 2} , A^{\otimes 2}) $ 
 restricts to  an isomorphism of  $R$-modules
 \[
  \Lambda^2 (A)
  \stackrel{\cong}{\rightarrow} 
  \mder (A^{\otimes 2} , A^{\otimes 2})^{\sgn},
 \]
 where $\mder (A^{\otimes 2} , A^{\otimes 2})^{\sgn}$ is the $R$-module of 
double brackets on $A$.
 \item 
 The isomorphism $\Pi_3 : A^{\otimes 3} \stackrel{\cong}{\rightarrow} \mder 
(A^{\otimes 3} , A^{\otimes 3}) $ 
 restricts to an isomorphism of  $R$-modules
 \[
 (A^{\otimes 3}) ^{\zed/3}
   \stackrel{\cong}{\rightarrow} 
  \mder (A^{\otimes 3} , A^{\otimes 3})^{\sgn},
 \]
 where $ \mder (A^{\otimes 3} , A^{\otimes 3})^{\sgn}$ is the $R$-module of 
triple brackets on $A$.
\end{enumerate}
\end{prop}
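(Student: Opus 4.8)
The plan is to exploit the fact that $\Pi_n$ is $\zed/n$-equivariant (Theorem \ref{thm:multider_poly}) and to track how the sign-equivariance condition on the target corresponds, under the inverse of $\Pi_n$, to a symmetry condition on $A^{\otimes n}$. For part (1), observe that $\mder(A^{\otimes 2},A^{\otimes 2})^{\sgn}$ is precisely the $(-1)$-eigenspace of the transposition $\tau \in \zed/2$ acting on $\mder(A^{\otimes 2},A^{\otimes 2})$, i.e. the kernel of $\mathrm{id}+\tau$. Since $\Pi_2$ is an isomorphism of $R[\zed/2]$-modules, it carries the kernel of $\mathrm{id}+\tau$ on the source isomorphically onto the kernel of $\mathrm{id}+\tau$ on the target. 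By definition the kernel of $\mathrm{id}+\tau$ on $A^{\otimes 2}$ is $\Lambda^2(A)$, so the restriction of $\Pi_2$ gives the asserted isomorphism. The identification of $\mder(A^{\otimes 2},A^{\otimes 2})^{\sgn}$ with the module of double brackets is Remark \ref{rem:n_bracket}.

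For part (2) the same strategy applies but requires slightly more care, since $\zed/3$ has no element of order two: the sign condition $\sigma\cdot\phi = (-1)^{\sgn(\sigma)}\phi$ for all $\sigma\in\zed/3$ reduces, because every element of $\zed/3$ is even, to the condition $\sigma\cdot\phi = \phi$ for all $\sigma$, i.e. $\mder(A^{\otimes 3},A^{\otimes 3})^{\sgn} = \mder(A^{\otimes 3},A^{\otimes 3})^{\zed/3}$. Likewise, on the source side, $(A^{\otimes 3})^{\zed/3}$ is by definition the submodule of $\zed/3$-fixed points. Since $\Pi_3$ is an isomorphism of $R[\zed/3]$-modules by Theorem \ref{thm:multider_poly}, it restricts to an isomorphism on $\zed/3$-fixed points, which is exactly the claimed isomorphism $(A^{\otimes 3})^{\zed/3} \xrightarrow{\cong} \mder(A^{\otimes 3},A^{\otimes 3})^{\sgn}$. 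Again, the fact that the target is the module of triple brackets is Remark \ref{rem:n_bracket}, where triple brackets are characterized as cyclically invariant multi-derivations.

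The only genuine point to verify is that, in both parts, the relevant submodule of the target that one wants ($n$-brackets) really is the image of the corresponding natural submodule of $A^{\otimes n}$ under $\Pi_n$, and this follows purely formally from equivariance of an isomorphism: an isomorphism of $G$-modules restricts to an isomorphism on any isotypic or eigenspace component. I do not anticipate a substantive obstacle here; the content is entirely in Theorem \ref{thm:multider_poly} together with the bookkeeping that, for $\zed/3$, ``$\sgn$-equivariant'' collapses to ``invariant'' because $3$ is odd. One should, however, note the caveat already flagged before the proposition — in characteristic two the submodule $\Lambda^2(A)$ (the kernel of $\mathrm{id}+\tau$) is not the usual exterior square — so in part (1) it is important that $\Lambda^2(A)$ be understood exactly as defined, namely as $\ker(\mathrm{id}+\tau)$, and with that convention the argument goes through uniformly over any commutative ring $R$.
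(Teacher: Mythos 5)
Your proposal is correct and follows exactly the route the paper takes: the paper's proof simply says the result is an immediate consequence of the $R[\zed/n]$-equivariance of the isomorphism $\Pi_n$ from Theorem \ref{thm:multider_poly}, and your argument spells out precisely that bookkeeping (the $(-1)$-eigenspace of $\tau$ for $n=2$, and the collapse of the sign condition to $\zed/3$-invariance for $n=3$ since $3$-cycles are even). No gaps.
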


\begin{proof}
 An immediate consequence of the definitions and  Theorem 
\ref{thm:multider_poly}, using the $R [\zed/n]$-equivariance in the cases $n=2$ 
and $n=3$.
\end{proof}

\begin{rem}
Explicitly, for $\Psi \in \Lambda^2 (A) \subset A^{\otimes 2}$ (so that $\tau 
\Psi = - \Psi$), the associated double bracket is 
\[
 \dbo a , b \dbc _{\Psi}
 = 
 (a \otimes 1 - 1\otimes a) (b \otimes 1 - 1 \otimes b) \Psi.
\]
\end{rem}

\begin{nota}
 Let $(-)_{23}$ denote the $R$-linear map $A^{\otimes 2} \rightarrow A^{\otimes 
3}$, $a \otimes b \mapsto 1 \otimes a \otimes b$ and $(-)_{13}$ the map $a 
\otimes b \mapsto a \otimes 1 \otimes b$. 
\end{nota}

\begin{prop}
\label{prop:jac_poly}
Let $A = R[t_1, \ldots, t_d]$, where $d \geq 2$. 
 Under the isomorphisms of Proposition \ref{prop:double_triple}, the set map 
induced by the double Jacobiator 
 \[
  \Big\{ 
  \text{double brackets on $A$}
  \Big\}
  \stackrel{\jac}{\rightarrow}
   \Big\{ 
  \text{triple brackets on $A$}
  \Big\}  
 \]
 (cf Proposition \ref{prop:jac_double_is_triple})  identifies with the 
(non-linear) map 
 \begin{eqnarray*}
\mathfrak{J} :   \Lambda^2 (A) & \rightarrow &  (A^{\otimes 3}) ^{\zed/3} \\
  \Psi & \mapsto & 
  \sum _{\sigma \in \zed/3} 
  \sigma \cdot (\Psi_{13} \Psi_{23}) 
 \end{eqnarray*}
where the product $\Psi_{13} \Psi_{23}$ is formed in $A^{\otimes 3}$.
\end{prop}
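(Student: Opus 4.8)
The plan is to unwind both sides of the claimed identification against the formula for $\Pi_2$ and the definition of $\jac$, and then to match terms. First I would recall, via the explicit formula in the Remark following Proposition \ref{prop:double_triple}, that for $\Psi \in \Lambda^2(A)$ the associated double bracket is $\phi := \Pi_2(\Psi)$, with $\phi(a \otimes b) = (a \otimes 1 - 1 \otimes a)(b \otimes 1 - 1 \otimes b)\Psi$; equivalently $\phi = \phi_2 \cdot \Psi$ where $\phi_2$ is the universal multi-derivation of Proposition \ref{prop:phin_morphism} and the dot is the codomain-multiplication action of Lemma \ref{lem:mder_product}. Since $\jac(\phi)$ is a triple bracket (Proposition \ref{prop:jac_double_is_triple}), Proposition \ref{prop:double_triple}(2) guarantees it lies in the image of $\Pi_3$, so it suffices to compute the unique element $\mathfrak{J}(\Psi) \in (A^{\otimes 3})^{\zed/3}$ with $\Pi_3(\mathfrak{J}(\Psi)) = \jac(\phi)$, and then check it equals $\sum_{\sigma \in \zed/3} \sigma \cdot (\Psi_{13}\Psi_{23})$.

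The central computation is to evaluate $(\phi \otimes 1_A) \circ (1_A \otimes \phi)$ on a generating tensor $a \otimes b \otimes c$ and simplify. Here $(1_A \otimes \phi)(a \otimes b \otimes c) = a \otimes \phi(b \otimes c) = a \otimes \big((b\otimes 1 - 1 \otimes b)(c \otimes 1 - 1\otimes c)\Psi\big)$, an element of $A \otimes A^{\otimes 2} = A^{\otimes 3}$; writing $\Psi = \sum \Psi' \otimes \Psi''$, this expands into a sum of terms each of which has the shape $a \otimes (\text{monomial in } b,c \text{ times } \Psi') \otimes (\text{monomial in } b,c \text{ times } \Psi'')$. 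Then applying $\phi \otimes 1_A$ means applying $\phi$ to the first two tensor slots, i.e. $\phi$ evaluated on $a \otimes (\text{stuff})$, which introduces another factor $(a \otimes 1 - 1 \otimes a)$ and a factor $(\text{stuff} \otimes 1 - 1 \otimes \text{stuff})$ together with a second copy of $\Psi$. The key algebraic fact I will exploit is that, because $A$ is commutative and the multiplication is being done in the codomain $A^{\otimes 2}$ (resp. $A^{\otimes 3}$), the factors $(x \otimes 1 - 1 \otimes x)$ behave like a derivation: $(xy \otimes 1 - 1 \otimes xy) = (x \otimes 1)(y\otimes 1 - 1 \otimes y) + (x \otimes 1 - 1 \otimes x)(1 \otimes y)$, and more usefully, modulo the structure, the "difference" operators telescope so that the double-derivation identity (\ref{eqn:fund}) and its triple analogue collapse the two nested evaluations into a single product of the three elementary differences $(a\otimes\cdots)(b\otimes\cdots)(c\otimes\cdots)$ against $\Psi_{13}\Psi_{23}$. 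Concretely, I expect $(\phi \otimes 1_A)\circ(1_A\otimes\phi)$ evaluated on $a\otimes b\otimes c$ to reduce to $\phi_3(a\otimes b\otimes c)\cdot(\Psi_{13}\Psi_{23})$ up to the normalization bookkeeping recorded in Remark \ref{rem:phi2_phi3}; summing over $\zed/3$ then gives exactly $\Pi_3\big(\sum_\sigma \sigma\cdot(\Psi_{13}\Psi_{23})\big)$ on the one side and $\jac(\phi)$ on the other.

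After the term-matching, I would close the argument by invoking the injectivity of $\Pi_3$ (Theorem \ref{thm:multider_poly}): since both $\jac(\Pi_2(\Psi))$ and $\Pi_3(\mathfrak{J}(\Psi))$ are multi-derivations that agree after the computation above, and $\Pi_3$ is an isomorphism, the element of $(A^{\otimes 3})^{\zed/3}$ representing $\jac(\Pi_2(\Psi))$ is precisely $\mathfrak{J}(\Psi)$. It remains to note that $\mathfrak{J}$ indeed lands in $(A^{\otimes 3})^{\zed/3}$, which is immediate since it is defined as a sum over the cyclic group, and that the diagram is compatible with the $\sgn$-restriction recorded in Proposition \ref{prop:double_triple}, which follows from the $R[\zed/n]$-equivariance already established.

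The main obstacle will be the bookkeeping in the central computation: tracking how the two factors of $\Psi$ distribute across the three tensor slots after the nested evaluation, and verifying that the resulting eight-ish terms per summand reorganize into $\phi_3(a\otimes b\otimes c)$ times $\Psi_{13}\Psi_{23}$ with the correct signs and without leftover cross-terms. The anti-symmetry $\tau\Psi = -\Psi$ will be needed to cancel the terms that do not fit the pattern, so I would be careful to use it only where genuinely required (rather than assuming it prematurely), and I would organize the expansion using the same lexicographic-by-monomial-length ordering as in Remark \ref{rem:phi2_phi3} to make the cancellations transparent.
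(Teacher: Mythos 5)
Your proposal follows essentially the same route as the paper: reduce via Proposition \ref{prop:double_triple} to identifying $\jac\dbo,\dbc_\Psi$ in the image of $\Pi_3$, then carry out the direct expansion using the anti-symmetry $\tau\Psi=-\Psi$ (the paper's proof is exactly this, stated tersely). The only caution is that your guessed intermediate identity, equating $(\phi\otimes 1_A)\circ(1_A\otimes\phi)$ on $a\otimes b\otimes c$ with $\phi_3(a\otimes b\otimes c)\cdot\Psi_{13}\Psi_{23}$ \emph{before} summing over $\zed/3$, need not hold term-by-term; the matching of terms genuinely requires both the anti-symmetry and the cyclic sum, which you do anticipate in your closing paragraph.
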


\begin{proof}
 By Proposition \ref{prop:double_triple}, it suffices to identify $\jac \dbo , 
\dbc_\Psi$ in the image of $\Pi_3$. It is  clear that the expression must 
be a $\zed/3$-invariant quadratic expression in $\Psi$. The result follows  
by  direct calculation, using the anti-symmetry $\tau \Psi = - \Psi$. (The 
calculation may be simplified by using the proof 
of \cite[Proposition 2.3.1]{vdB}.) 
\end{proof}

\begin{thm}
\label{thm:no_dP_rank>2}
Let $R$ be a commutative ring on which the squaring map $x \mapsto x^2$ is 
injective, then there is no non-trivial double Poisson structure on  $A:= 
R[t_1, 
\ldots, t_d]$ for  $d \geq 2$. 
\end{thm}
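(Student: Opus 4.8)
The plan is to combine Proposition \ref{prop:jac_poly} with the regularity of the elements $t_i \otimes 1 - 1 \otimes t_i$ to force $\Psi = 0$. By Proposition \ref{prop:double_triple}, a double Poisson structure on $A = R[t_1, \ldots, t_d]$ corresponds to an element $\Psi \in \Lambda^2(A) \subset A^{\otimes 2}$, and the double Jacobi relation says exactly that $\mathfrak{J}(\Psi) = \sum_{\sigma \in \zed/3} \sigma \cdot (\Psi_{13} \Psi_{23}) = 0$ in $A^{\otimes 3}$. So the whole theorem reduces to the purely algebraic statement: if $R$ has injective squaring and $\Psi \in \Lambda^2(A)$ satisfies $\sum_{\sigma \in \zed/3} \sigma \cdot (\Psi_{13} \Psi_{23}) = 0$, then $\Psi = 0$. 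I would first dispose of the homogeneous case using Lemma \ref{lem:homog_dP} (so that it suffices to treat a homogeneous $\Psi$ of each internal degree $t$, since a general $\Psi$ has nonzero minimal and maximal components which are themselves solutions), and then within that, work degree by degree — but in fact I expect the cleanest route is to exploit a leading-term / lexicographic argument directly on $\Psi$ without passing through homogeneity.

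The key computation is to expand $\Psi_{13}\Psi_{23}$ and its two cyclic rotates as polynomials in $A^{\otimes 3} = R[t_1 \otimes 1 \otimes 1, \ldots]$ — equivalently in the variables $u_i := t_i \otimes 1 - 1 \otimes t_i$ and their cyclic images — and to identify a term that cannot be cancelled. Writing $\Psi = \sum_{i} u_i \,\Theta_i$ is not available (that would be the image of $\Pi$, which $\Psi$ need not lie in as an element of $A^{\otimes 2}$, only as a double bracket), so instead I would write $\Psi$ in terms of a monomial basis of $A^{\otimes 2}$ and pick out the monomial of $\Psi$ that is maximal for a suitable term order (say, lexicographic with $t_1 \otimes 1 > t_1 \otimes 1 \otimes \cdots$ etc., mimicking the ordering used in Remark \ref{rem:phi2_phi3}). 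The product $\Psi_{13}\Psi_{23}$ then has a well-controlled leading term, the three cyclic rotates have leading terms in "different" positions of $A^{\otimes 3}$ (this is the analogue of the observation in Remark \ref{rem:phi2_phi3} that $\phi_3(a\otimes b\otimes c)$, $\phi_3(b\otimes c\otimes a)$, $\phi_3(c\otimes a\otimes b)$ have genuinely different expressions for $n=3$), so no cancellation can occur across the three summands; the surviving leading term is (up to sign and a factor coming from squaring in $R$) the square of the leading coefficient of $\Psi$, and injectivity of squaring forces it to vanish, contradiction unless $\Psi = 0$.

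The main obstacle is precisely the bookkeeping in that leading-term analysis: one must choose the term order on $A^{\otimes 3}$ carefully enough that (a) the leading term of each cyclic summand $\sigma\cdot(\Psi_{13}\Psi_{23})$ is simply the product of the leading terms of the two factors, with no interference from lower-order cross terms, and (b) the three leading terms genuinely live in distinct monomials of $A^{\otimes 3}$, so that the equation $\mathfrak J(\Psi)=0$ forces each to be individually zero. A convenient way to guarantee (b) is to track, for each cyclic rotate, which tensor slot carries the "top" variable of $\Psi$ and which carries a $1$: since $\Psi_{13}$ puts a $1$ in slot $2$ and $\Psi_{23}$ puts a $1$ in slot $1$, the product $\Psi_{13}\Psi_{23}$ and its rotates are distinguished by the pattern of where the degree is concentrated, exactly as in the $n=3$ case of Remark \ref{rem:phi2_phi3}. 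Once that is set up, the conclusion that the top coefficient squares to zero, hence is zero by hypothesis on $R$, and then a downward induction removing that top term, is routine. Where squaring is \emph{not} injective this argument breaks down, consistent with the characteristic-two caveat and with the exotic example on $\kf[t]$ (Example \ref{exam:exotic}), where $d = 1$ and the regularity-based machinery of Theorem \ref{thm:multider_poly} is unavailable in the first place.
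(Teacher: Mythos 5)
Your proposal is correct and follows essentially the same route as the paper: reduce via Propositions \ref{prop:double_triple} and \ref{prop:jac_poly} (together with the injectivity of $\Pi_3$) to the statement that $\mathfrak{J}(\Psi)=0$ forces $\Psi=0$, then kill the maximal lexicographical term of $\Psi$ by observing that it contributes a ``diagonal'' square term to $\Psi_{13}\Psi_{23}$ which cannot be cancelled by the other cyclic summands, and invoke injectivity of squaring on $R$. The only cosmetic difference is that no downward induction is needed once the top coefficient is shown to vanish, since that already contradicts its choice as a nonzero leading term.
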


\begin{proof}
 It follows from the identification given in Remark \ref{rem:phi2_phi3} that, 
for any $i, j,k \in \{ 1 , \ldots, d \}$, the element 
 $\phi_3 (t_i, t_j, t_k)$ is a regular element of $A^{\otimes 3}$ (this does 
not 
require the hypothesis upon $R$).
Hence, by Proposition \ref{prop:jac_poly},  to prove the result it suffices to 
show that $\Psi \in A^{\otimes 2}$ is zero if and only if  $ \sum _{\sigma \in 
\zed/3} 
  \sigma \cdot (\Psi_{13} \Psi_{23}) \in A^{\otimes 3}$ is zero (anti-symmetry 
of $\Psi$ plays no rôle here).

 The latter fact is seen by exploiting the natural grading of $A$ (placing the 
generators in degree one, so the grading coincides with the length grading), 
together with the induced
 left lexicographical ordering on $A^{\otimes 2}$ and $A^{\otimes 3}$. Namely, 
if $\Psi$ is non-zero, the terms of maximal lexicographical order in $\Psi$ 
contribute to 
 a non-zero term of maximal lexicographical order in $\sum _{\sigma \in \zed/3} 
  \sigma \cdot (\Psi_{13} \Psi_{23})$ (Cf. Remark \ref{rem:phi2_phi3}).
  
  Explicitly, writing $\Psi = \sum_m \alpha_m\otimes m$ in terms of the 
monomial 
basis of $A$, one considers the contributions  
  \[
   (\alpha_m)^2 \otimes m \otimes m 
  \]
in $\Psi_{13} \Psi_{23}$ to the terms of maximal lexicographical order in   
$\sum _{\sigma \in \zed/3} 
  \sigma \cdot (\Psi_{13} \Psi_{23})$. Finally, the hypothesis upon $R$ implies 
that   $(\alpha_m)^2 \otimes m \otimes m$ is zero if and only if $\alpha_m $ is 
zero.
  \end{proof}

\section{Double Poisson structures on commutative algebras}
\label{sect:comm}

In this section, $A$ denotes a commutative $R$-algebra. 

\begin{defn}
 A double bracket on $A$ is {\em standard} if it lies in the image of the 
morphism of $R$-modules
 \[
  \Pi_2 : \Lambda^2 (A) \rightarrow 
  \mder (A^{\otimes 2} , A^{\otimes 2})^{\sgn}
 \]
induced by $\Pi_2$ (as in Proposition \ref{prop:double_triple}) and is {\em 
exotic} otherwise, so that 
the $R$-module of exotic double brackets is the cokernel of the above morphism.
\end{defn}

\begin{rem}
\ 
\begin{enumerate}
 \item 
Exotic double brackets exist: cf. Example \ref{exam:exotic}. However, these 
cannot be classified easily  (cf. the case $A= R[t]$ in Section 
\ref{sect:symm}).
\item 
The restriction to $\Lambda^2 (A)$ is not  severe. For example, if $2$ is 
invertible in $R$, the inclusion $\Lambda^2 (A) \hookrightarrow A^{\otimes 2}$ 
admits the retract $ x \otimes y \mapsto \frac{1}{2} (x \otimes y - y \otimes 
x)$. 
\end{enumerate}
\end{rem}

Observe that the set map $\mathfrak{J} :   \Lambda^2 (A)  \rightarrow  
(A^{\otimes 
3}) ^{\zed/3}$ 
of Proposition \ref{prop:jac_poly} can be defined for any commutative algebra 
$A$. 

\begin{thm}
\label{thm:standard_dP}
 Let $\Psi \in \Lambda^2 (A)$ and consider the associated (standard) double 
bracket $\dbo , \dbc _\Psi := \Pi_2 (\Psi)$. Then:
 \begin{enumerate}
  \item 
  the associated bracket $\{, \} : A^{\otimes 2} \rightarrow A$ is trivial; 
 \item 
 $\dbo , \dbc _\Psi$ defines a double Poisson structure on $A$ if and only if 
$\Pi_3( \mathfrak{J}(\Psi))$ is zero in $\mder (A^{\otimes 3}, A^{\otimes 
3})^{\sgn}$.  
 \end{enumerate}
\end{thm}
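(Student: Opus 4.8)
The plan is to verify the two assertions essentially by unwinding the relevant definitions and invoking the results already established for standard double brackets. For part (1), I would compute the bracket $\{a,b\} = \mu \big( \dbo a,b\dbc_\Psi \big)$ directly from the explicit formula $\dbo a,b\dbc_\Psi = (a\otimes 1 - 1\otimes a)(b\otimes 1 - 1\otimes b)\Psi$. Applying the multiplication $\mu : A^{\otimes 2} \to A$ and using commutativity of $A$, the factor $(a\otimes 1 - 1\otimes a)$ maps to $\mu(a\otimes 1 - 1\otimes a) = a - a = 0$; since $\mu$ is an algebra map on $A^{\otimes 2}$ (as $A$ is commutative), the whole product maps to zero. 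This is exactly the computation underlying Corollary \ref{cor:induced_bracket_trivial}, now phrased for an arbitrary commutative $A$ rather than a polynomial algebra; indeed, the vanishing of $\mu \circ \phi_2$ is what drives it, together with the fact that $\dbo,\dbc_\Psi$ lies in the $A^{\otimes 2}$-span of $\phi_2$ via the map of Lemma \ref{lem:mder_product}.

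For part (2), the double Poisson condition is by definition the vanishing of the double Jacobiator $\jac(\dbo,\dbc_\Psi)$ in $\hom_R(A^{\otimes 3}, A^{\otimes 3})$. By Proposition \ref{prop:jac_double_is_triple} this Jacobiator is a triple bracket, i.e. lies in $\mder(A^{\otimes 3}, A^{\otimes 3})^{\sgn}$. The content of the claim is the identification $\jac(\dbo,\dbc_\Psi) = \Pi_3\big(\mathfrak{J}(\Psi)\big)$ as elements of $\mder(A^{\otimes 3}, A^{\otimes 3})^{\sgn}$, after which the equivalence with vanishing of $\Pi_3(\mathfrak{J}(\Psi))$ is immediate. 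To establish this identification I would redo the direct calculation of Proposition \ref{prop:jac_poly}, but now without invoking Theorem \ref{thm:multider_poly}: that theorem was used in the polynomial case merely to recognise the answer as lying in the image of $\Pi_3$, whereas here $\Pi_3(\mathfrak{J}(\Psi))$ is simply the name we give to the element $\mu$-built from $\sum_{\sigma\in\zed/3}\sigma\cdot(\Psi_{13}\Psi_{23})$ through $\phi_3$ and Lemma \ref{lem:mder_product}. So the real task is to check the algebraic identity
\[
\jac\big(\dbo,\dbc_\Psi\big)(a\otimes b\otimes c) = \sum_{\sigma\in\zed/3}\sigma\cdot\Big( \phi_3\big( \text{(suitable permutation of } a,b,c\text{)}\big)\cdot\big(\text{term of } \mathfrak{J}(\Psi)\big) \Big),
\]
which amounts to expanding $(\phi\otimes 1_A)\circ(1_A\otimes\phi)$ for $\phi = \dbo,\dbc_\Psi$ and matching it, term by term, with $\Pi_3(\mathfrak{J}(\Psi))$ using the regularity-free form of the formula for $\phi_3$ from Remark \ref{rem:phi2_phi3}.

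The main obstacle is precisely this bookkeeping: in the polynomial case one could appeal to Theorem \ref{thm:multider_poly} and to the ``unique term of maximal lexicographical order'' observation to pin down the quadratic-in-$\Psi$ answer cheaply, whereas for a general commutative $A$ one must carry out the substitution $\dbo a, b\dbc_\Psi = \phi_2(a\otimes b)\cdot\Psi$ inside the composite $(\phi\otimes 1)\circ(1\otimes\phi)$ honestly, track how the outer-bimodule multiplications interact with the extra $\Psi$-factors, and then symmetrise over $\zed/3$. I expect this to be a somewhat lengthy but entirely mechanical verification, substantially the same computation as in \cite[Proposition 2.3.1]{vdB} decorated with the commuting factors coming from $\Psi$; the anti-symmetry $\tau\Psi = -\Psi$ enters, as in Proposition \ref{prop:jac_poly}, to collapse the cross-terms. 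Once the identity $\jac(\dbo,\dbc_\Psi) = \Pi_3(\mathfrak{J}(\Psi))$ is in hand, both directions of the ``if and only if'' are formal. I would therefore present part (1) as an immediate computation and part (2) as a reduction to Proposition \ref{prop:jac_poly}'s calculation performed in the present generality, remarking that the only input particular to polynomial algebras in that proposition — the use of Theorem \ref{thm:multider_poly} to recognise the image of $\Pi_3$ — is unnecessary here since $\Pi_3(\mathfrak{J}(\Psi))$ is being used as the definition.
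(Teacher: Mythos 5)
Your proposal is correct and takes essentially the same approach as the paper: part (1) is the observation that $\mu$ kills $\phi_2$ (as in Corollary \ref{cor:induced_bracket_trivial}), and part (2) rests on the fact that the identity $\jac(\dbo , \dbc_\Psi)=\Pi_3(\mathfrak{J}(\Psi))$ established in Proposition \ref{prop:jac_poly} is a universal computation valid for any commutative $A$, the only difference being that $\Pi_3$ need no longer be injective --- which is exactly why the criterion is stated as the vanishing of $\Pi_3(\mathfrak{J}(\Psi))$ rather than of $\mathfrak{J}(\Psi)$ itself.
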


\begin{proof}
 The first statement follows as for Corollary 
\ref{cor:induced_bracket_trivial}. 
The fact that $ \dbo , \dbc _\Psi$ is a standard double bracket implies that 
the 
calculation of Proposition \ref{prop:jac_poly} is universal, the only 
difference 
being 
 that the triple brackets on $A$ cannot be identified with $(A^{\otimes 
3})^{\zed/3}$ via $\Pi_3$. 
\end{proof}

\begin{rem}
 Theorem \ref{thm:standard_dP} provides a recipe for constructing examples of 
non-trivial double Poisson structures on commutative algebras: for any $\Psi 
\in 
\Lambda ^2 (R[t_i])$ ($R[t_i]$ a polynomial algebra) it suffices to pass to a 
quotient $A$ of $R[t_i]$ for which  $\Pi_3( \mathfrak{J}(\Psi))$ is trivial in  
$\mder 
(A^{\otimes 3}, A^{\otimes 3})^{\sgn}$. Note that, in all cases, the associated 
bracket (as in Corollary \ref{cor:induced_bracket_trivial}) is trivial. 
\end{rem}

\appendix
\section{Double Poisson structures on $R[t]$}
\label{sect:symm}

In \cite[Example 2.3.3]{vdB}, Van den Bergh stated a classification of the  
(homogeneous) 
double Poisson structures on the polynomial algebra $\kf[t]$, for $\kf$ a 
field. A proof over a more general ring, also considering non-homogeneous 
structures, is given here.

\begin{prop}
\label{prop:poly_dim1}
Let $R$ be a commutative ring on which $x \mapsto x^2$ is injective, then the 
only homogeneous double Poisson structures on $A := R [t]$ are 
scalar multiples of the double Poisson brackets determined by  
 \begin{eqnarray*}
  \dbo t , t \dbc^{1} & = & t \otimes 1 - 1 \otimes t \\
  \dbo t , t \dbc^3 & = & t^2  \otimes t - t \otimes t^2 = (t \otimes 1 - 1 
\otimes t) (t \otimes t),
 \end{eqnarray*}
 where the suffix corresponds to the degree of the element $\dbo t , t\dbc$.
 
In general, for $\lambda, \mu , \nu \in R$, 
\[
 \dbo t , t \dbc = \lambda  \dbo t , t \dbc^{1} + \mu (t^2\otimes 1 - 1 \otimes 
t^2) + \nu \dbo t ,t \dbc ^3
\]
defines a double Poisson structure if and only if $\lambda \nu - \mu^2 =0$ and 
any double Poisson structure on $A$ is of this form.
\end{prop}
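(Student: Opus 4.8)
The plan is to reduce the classification to a finite, explicit computation by exploiting that $A = R[t]$ is the \emph{free} associative $R$-algebra on the single generator $t$. I would draw two consequences of this freeness. A double bracket on $A$ is an anti-symmetric bi-derivation, hence is determined by, and may be prescribed freely through, its value $\Psi := \dbo t, t \dbc \in A^{\otimes 2}$; evaluating the anti-symmetry axiom at $(t,t)$ gives $\tau \Psi = - \Psi$, i.e. $\Psi \in \Lambda^2(A)$, and conversely any such $\Psi$ extends (anti-symmetry and the bi-derivation identities propagate from the generator). In contrast to the polynomial case of Lemma~\ref{lem:dder_poly}, $\Pi_2$ is here far from surjective — this is exactly the source of the exotic brackets of Example~\ref{exam:exotic}. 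Likewise every triple bracket on $A$, and in particular $\jac(\dbo\,,\,\dbc_\Psi)$ (Proposition~\ref{prop:jac_double_is_triple}), is determined by its value on $t \otimes t \otimes t \in (A^{\otimes 3})^{\zed/3}$. Writing $\phi := \dbo\,,\,\dbc_\Psi$, it follows that $\phi$ is a double Poisson structure if and only if the single element
\[
 \mathcal{J}(\Psi) \ :=\ \jac(\phi)(t \otimes t \otimes t) \ \in\ (A^{\otimes 3})^{\zed/3}
\]
vanishes, so the task is to determine the zero locus of the quadratic map $\mathcal{J} : \Lambda^2(A) \to (A^{\otimes 3})^{\zed/3}$.

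To make $\mathcal{J}$ explicit I would use the bi-derivation recursion $\dbo t, t^m \dbc = \sum_{p+q = m-1} (t^p \otimes 1)\, \Psi\, (1 \otimes t^q)$ to evaluate $(\phi \otimes 1_A) \circ (1_A \otimes \phi)$ on $t \otimes t \otimes t$, and then cyclically symmetrise as in Definition~\ref{defn:double_Jacobi}; this yields a closed formula for the coefficient of each monomial $t^a \otimes t^b \otimes t^c$ of $\mathcal{J}(\Psi)$ as an explicit bilinear form in the coefficients of $\Psi$ — the $R[t]$-analogue of the map $\mathfrak{J}$ of Proposition~\ref{prop:jac_poly} (the computation may be shortened using the proof of \cite[Proposition~2.3.1]{vdB}). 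I would then bring in the length grading on $A$ ($\deg t = 1$) and the induced left lexicographic order, and decompose $\Psi = \sum_d \Psi^{(d)}$; by Lemma~\ref{lem:homog_dP} applied to $\phi$, the components $\Psi^{\min}$ and $\Psi^{\max}$ are themselves double Poisson, which reduces the structural part of the argument to the homogeneous case.

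For $\Psi$ homogeneous of degree $d$, written $\Psi = \sum_m \epsilon_m\, t^{d-m} \otimes t^m$ with $\epsilon_m = - \epsilon_{d-m}$, the key point is that although the lexicographically leading monomial of $\mathcal{J}(\Psi)$ is annihilated by the $\zed/3$-symmetrisation, a suitable sub-leading one survives: the coefficient of $t^{2d-2} \otimes t \otimes 1$ in $\mathcal{J}(\Psi)$ works out to be $\epsilon_0^2$. As squaring is injective on $R$, $\mathcal{J}(\Psi) = 0$ forces $\epsilon_0 = \epsilon_d = 0$, and iterating the same extraction with the support of $\Psi$ shrunk from both ends kills $\epsilon_1, \epsilon_2, \dots$ in turn until the process terminates; a short residual check of the remaining low-dimensional families then leaves exactly the one-parameter families $\lambda(t \otimes 1 - 1 \otimes t)$ in degree $1$ and $\nu(t^2 \otimes t - t \otimes t^2)$ in degree $3$ (that these are double Poisson is the verification that $\mathcal{J}$ vanishes on them), and nothing in degree $2$ or in any degree $\geq 4$ — recovering Van den Bergh's classification. (Alternatively, since a ring on which squaring is injective is reduced, the homogeneous statement can be deduced from the field case by reduction modulo primes.) Feeding this back through Lemma~\ref{lem:homog_dP}, a non-zero $\Psi$ must have all homogeneous components in degrees $1,2,3$, with $\Psi^{(1)} = \lambda(t \otimes 1 - 1 \otimes t)$, $\Psi^{(3)} = \nu(t^2 \otimes t - t \otimes t^2)$ and, writing the anti-symmetric degree-$2$ part as $\mu(t^2 \otimes 1 - 1 \otimes t^2)$, one has $\Psi = (t \otimes 1 - 1 \otimes t) \cdot \Theta$ with $\Theta = \lambda(1 \otimes 1) + \mu(t \otimes 1 + 1 \otimes t) + \nu(t \otimes t)$ symmetric of degree $\leq 2$. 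Substituting into $\mathcal{J}(\Psi) = 0$ and reading off coefficients degree by degree, the degree-$2$ and degree-$4$ parts vanish identically while the degree-$3$ part equals $(\lambda \nu - \mu^2)$ times a fixed element of $(A^{\otimes 3})^{\zed/3}$ with a unit coefficient; hence $\mathcal{J}(\Psi) = 0$ if and only if $\lambda \nu - \mu^2 = 0$, which is the assertion.

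The only substantive obstacle is the bookkeeping for $\mathcal{J}$: since the $\zed/3$-symmetrisation destroys the naive leading term, one must locate by hand the correct sub-leading monomials whose coefficients are genuine squares $\epsilon_j^2$ (respectively, in the inhomogeneous step, a unit multiple of $\lambda\nu - \mu^2$), after which the hypothesis on $R$ does the rest. A little additional care is also needed in characteristic two, where $\Lambda^2(A)$ is larger.
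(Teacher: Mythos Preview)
Your proposal is correct and follows essentially the same approach as the paper: both reduce to evaluating the Jacobiator on $t\otimes t\otimes t$ and then extract coefficients of carefully chosen monomials in $A^{\otimes 3}$ to force squares of the coefficients of $\Psi$ to vanish. Your chosen monomial $t^{2d-2}\otimes t\otimes 1$ is precisely the paper's $t^{2(N-a-1)}\otimes t^{a+1}\otimes t^a$ specialised to $a=0$, and your recursive elimination of $\epsilon_0,\epsilon_1,\dots$ is a reorganisation of the paper's split into the single-term and multi-term cases; the non-homogeneous endgame (reduction via Lemma~\ref{lem:homog_dP} to the three-parameter family and the explicit degree-$3$ computation yielding $\lambda\nu-\mu^2$) is identical.
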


\begin{proof} 
A double Poisson structure on $R[t]$ is determined by $\dbo t, t \dbc$. 
It is straightforward to verify that   $\dbo t , t \dbc^{1}$ and $ \dbo t , t 
\dbc^3$ define homogeneous 
double Poisson structures on $R [t]$.

The  derivation property (using  induction upon $n \geq 1$) implies 
that 
\[
 \dbo t , t^n \dbc = \Big( \sum _{i+j = n-1} t^i \otimes t^j \Big) \dbo t,t 
\dbc, 
\]
where the product  is formed in the algebra $A^{\otimes 2}$.

Anti-symmetry implies that a  homogeneous double bracket $\dbo , \dbc ^N$ is an 
$R$-linear combination of terms of the form $(t^{N-i} \otimes t^i  - t^i 
\otimes 
t^{N-i})$, for  $N$ corresponding to the homogeneous degree and $0 \leq i < 
N/2$.

First consider the case where  $\dbo t, t \dbc ^N = \lambda (t^{N-i} \otimes 
t^i 
 - t^i \otimes t^{N-i})$ for some $i$; this is subdivided into two cases:
\begin{enumerate}
 \item 
 $\dbo t, t\dbc ^N = \lambda (t^{a+1} \otimes t^a - t^a \otimes t^{a+1})$, for 
$a \in \nat$ (so that $N= 2a +1$)
 and $\lambda \in R$. The cases $ a \in \{0, 1 \}$ correspond to the two 
cases given above, hence suppose that 
 $a >1$ (which implies that $2a > a+1$). 
 
Consider the coefficient of $t^{2a} \otimes 
t^{a+1} \otimes t^a$ in $\dbo t, t, t\dbc$. Write $\Phi$ for the element $(\dbo 
, \dbc \otimes \mathrm{id}) \circ (\mathrm{id} \otimes \dbo , \dbc ) (t \otimes 
t \otimes t)$.
Thus the double Jacobiator $\dbo t, t,t \dbc$ is the sum of the cyclic 
permutations of $\Phi$. 
Hence it is necessary to consider  the coefficients of  $t^{2a} \otimes t^{a+1} 
\otimes t^a$, $t ^a \otimes  
t^{2a} \otimes t^{a+1}$
 and $   t^{a+1} \otimes t^a \otimes t^{2a}$ in $\Phi$. The coefficient of the 
first is zero (the two contributions
 cancel) and the second has coefficient $- \lambda^2$; the hypothesis on $a$ 
ensures that the third cannot occur. Thus $\dbo, , \dbc =0$ implies that 
 $- \lambda^2 =0$, so that $\dbo , \dbc ^N =0$. 
 \item 
 $\dbo t, t\dbc ^N = \lambda (t^{N-a } \otimes t^a - t^a \otimes t^{N-a})$, 
with 
$N -a > a+1$. Consider the coefficient of $t^{2(N-a-1)} 
\otimes t^{a+1} \otimes t^a$ in $\dbo t, t, t\dbc$. In this case, the 
coefficient of $t^{2(N-a-1)} \otimes 
t^{a+1} \otimes t^a$ in $\Phi$ is 
 $\lambda^2$. If $2 (N-a -1) > N-a$ then the coefficients of $t^a \otimes 
t^{2(N-a-1)} \otimes 
t^{a+1}$ and $t^{a+1} \otimes t^a \otimes t^{2(N-a-1)} $ in $\Phi$ are both 
trivial. Hence (in this case) the condition 
 $\dbo, , \dbc =0$ implies that  $\lambda^2 =0$ and again $\dbo , \dbc ^N =0$.
 
 The inequality $2 (N-a -1) > N-a$ is equivalent to $N > a+2$; since $N > 2a 
+1$, by hypothesis, 
this is satisfied if $a \geq 1$  or if $a=0$ and $N>2$. In the remaining case, 
$N=2$ and $a=0$, it can be checked 
directly that $\dbo , \dbc ^N=0$.
\end{enumerate}

To complete the proof, one considers the case where $\dbo t,t \dbc^N $ has at 
least 
two non-trivial coefficients with respect to the basis $\{t^{N-i} \otimes t^i  
- 
t^i \otimes t^{N-i} | 0 \leq i < N/2 \}$.  Thus one can write 
\begin{eqnarray*}
 \dbo t,t \dbc ^N &=& \lambda (t^{N-a} \otimes t^a - t^a \otimes t^{N-a})  + 
\mu 
(t^{N-b } \otimes t^b - t^b \otimes t^{N-b}) 
\\
&&+ \sum_{b<k <N/2} \nu_k (t^{N-k} \otimes t^k - t^k \otimes t^{N-k}) 
\end{eqnarray*}
where $\lambda \neq 0$,  $\mu \neq 0$ and $0 \leq a < b < N/2$ (hence $N >2$).

Consider the coefficient of  
\[
 t ^{2N -b -a -1} \otimes t^b \otimes t^a
\]
in $\dbo t, t, t\dbc$.
As above using the notation $\Phi$, 
\begin{enumerate}
 \item 
 the coefficient of $t^{2N -b -a -1} \otimes t^{b} \otimes t^a$ in $\Phi$ is 
$\lambda^2 + \lambda \mu$;
 \item 
 the coefficient of $t^a \otimes t^{2N -b -a -1}\otimes t^{b} $ in $\Phi$ is $- 
\lambda \mu$ (the sign 
 arises from antisymmetry);
 \item 
 the term $t^b \otimes t^a \otimes t^{2N -b -a -1 }$ cannot arise in $\Phi$, 
since 
 $2N- b -a -1 > N-a$ (the difference is $N-1 -b$ and the latter is positive by 
the hypotheses). 
\end{enumerate}
It follows that the coefficient of $ t ^{2N -b -a -1} \otimes t^b \otimes t^a$ 
in $\dbo t, t, t\dbc$ is $\lambda^2$, thus $\lambda =0$, contradicting the 
hypothesis that $\lambda \neq 0$.

Finally, consider the non-homogeneous case. Here, by  Lemma \ref{lem:homog_dP}, 
the only non-trivial possibility is 
\[
 \dbo t, t \dbc = \lambda (t \otimes 1 - 1 \otimes t) + \mu  (t^2 \otimes 1 - 1 
\otimes t^2) + \nu (t^2  \otimes t - t \otimes t^2)
\]
where, if $\mu \neq 0$, then both  $\lambda$ and  $\nu$ are non zero.

The associated double Jacobiator $\dbo t, t, t \dbc $ in principle has terms in 
degrees $1$, $2$, $3$, $4$ and $5$; since $\dbo , \dbc^1$ and $\dbo , \dbc ^3$ 
give double Poisson structures, the terms in degrees $1$ and $5$ vanish (as 
already observed in Lemma  \ref{lem:homog_dP}). A straightforward calculation 
also shows that the terms in degrees $2$ and $4$ vanish. 

Finally, one finds that 
\[
 \dbo t, t, t \dbc 
 = 
 (\lambda \nu - \mu^2) \big ( \overline{1 \otimes t \otimes t^2} - \overline{1 
\otimes t^2 \otimes t}  \big ) 
\]
where $ \overline{1 \otimes t \otimes t^2}$ and $\overline{1 \otimes t^2 
\otimes 
t}$ denote the respective $\zed/3$-orbit sums. Hence the double bracket defines 
a double Poisson structure if and only if 
$\lambda \nu = \mu^2$.
\end{proof}

\begin{rem}
\ 
\begin{enumerate}
 \item 
  The transformation given by \cite[Example 2.3.3]{vdB} associated to the 
change 
of variables $t \mapsto t^{-1}$ (after extending to $\kf [t^{\pm 1}]$) 
 acts by $ \lambda \mapsto  - \nu $, $ \nu  \mapsto  - \lambda$ and  $\mu  
\mapsto  - \mu$,   as expected.
 \item 
  Over a field $\kf$, up to scalar multiplication and the action of $\kf^*$, 
considered as automorphisms of $\kf [t]$ via $\alpha : t  \mapsto \alpha t$, 
this gives the 
 single non-homogeneous example 
 \[
 \dbo t, t\dbc :=  (t \otimes 1 - 1 \otimes t) + (t^2 \otimes 1 - 1 \otimes 
t^2) 
+ (t^2  \otimes t - t \otimes t^2).
 \]
\end{enumerate}
 \end{rem}

\providecommand{\bysame}{\leavevmode\hbox to3em{\hrulefill}\thinspace}
\providecommand{\MR}{\relax\ifhmode\unskip\space\fi MR }
\providecommand{\MRhref}[2]{%
  \href{http://www.ams.org/mathscinet-getitem?mr=#1}{#2}
}
\providecommand{\href}[2]{#2}

\end{document}